\documentclass[11pt,a4paper]{article}
\usepackage{amsfonts,amsgen,amstext,amsbsy,amsopn,amsfonts,amssymb,amscd}
\usepackage[leqno]{amsmath}
\usepackage[amsmath,amsthm,thmmarks]{ntheorem}
\usepackage{epsf,epsfig}
\usepackage{float}
\usepackage{dsfont}
\usepackage{ebezier,eepic}
\usepackage{color}
\usepackage{booktabs}
\usepackage{tikz}
\usepackage{multirow}
\usepackage{mathrsfs}
\usepackage{graphicx}
\usepackage{subfigure}
\setlength{\textwidth}{150mm} \setlength{\oddsidemargin}{7mm}
\setlength{\evensidemargin}{7mm} \setlength{\topmargin}{-5mm}
\setlength{\textheight}{245mm} \topmargin -18mm

\newtheorem{thm}{Theorem}[section]

\newtheorem{prop}[thm]{Proposition}

\newtheorem{lem}[thm]{Lemma}
\newtheorem{example}[thm]{Example}
\newtheorem{false statement}{False statement}
\newtheorem{cor}[thm]{Corollary}

\theoremstyle{definition}

\newtheorem{claim}[thm]{Claim}

\newtheorem{conj}[thm]{Conjecture}

\makeatletter \@addtoreset{equation}{section}

\baselineskip 15pt

\def\hh{\mathcal{H}}

\def\hht{\mathcal{T}}

\def\hf{\mathcal{F}}
\def\hg{\mathcal{G}}

\def\hb{\mathcal{B}}

\begin{document}

\title{\bf\Large Four-vertex traces of finite sets}
\date{}
\author{Peter Frankl$^1$, Jian Wang$^2$\\[10pt]
$^{1}$R\'{e}nyi Institute, Budapest, Hungary\\[6pt]
$^{2}$Department of Mathematics\\
Taiyuan University of Technology\\
Taiyuan 030024, P. R. China\\[6pt]
E-mail:  $^1$frankl.peter@renyi.hu, $^2$wangjian01@tyut.edu.cn
}
\maketitle

\begin{abstract}
Let $[n]=X_1\cup X_2\cup X_3$ be a partition with $\lfloor\frac{n}{3}\rfloor \leq |X_i|\leq \lceil\frac{n}{3}\rceil$ and  define $\mathcal{G}=\{G\subset [n]\colon |G\cap X_i|\leq 1, 1\leq i\leq 3\}$. It is easy to check that the trace $\mathcal{G}_{\mid Y}:=\{G\cap Y\colon G\in \mathcal{G}\}$ satisfies $|\mathcal{G}_{\mid Y}|\leq 12$ for all 4-sets $Y\subset [n]$. For $n\geq 25$ it is proven that whenever $\mathcal{F}\subset 2^{[n]}$ satisfies $|\mathcal{F}|>|\mathcal{G}|$ then $|\mathcal{F}_{\mid C}|\geq 13$ for some $C\subset [n]$, $|C|=4$. Several further results of a similar flavor are established as well.
\end{abstract}

\section{Introduction}

Let $[n]=\{1,2,\ldots,n\}$ be the standard $n$-element set, $2^{[n]}$ its powerset. For a family $\hf\subset 2^{[n]}$ and  a subset $Y\subset [n]$ let $\hf_{\mid Y}=\{F\cap Y\colon F\in\hf\}$ denote the {\it trace} of $\hf$ on $Y$. Hajnal \cite{bondy} introduced the {\it arrow relation} $(n,m)\rightarrow (a,b)$ to denote that for all $\hf\subset 2^{[n]}$ with $|\hf|\geq m$ there exists an $a$-element set $Y\subset  [n]$ such that $|\hf_{\mid Y}|\geq b$. For $\hf\subset 2^{[n]}$,  let $\hf\rightarrow (a,b)$ denote that there exists an $a$-element set $Y\subset  [n]$ such that $|\hf_{\mid Y}|\geq b$.

One of the most important results in extremal set theory, the Sauer-Shelah-Vapnik-Chervonenkis Theorem (\cite{S},\cite{SP},\cite{VC}) is equivalent to the arrow relation
\begin{align}\label{ineq-1.1}
\left(n,1+\sum_{i<k}\binom{n}{i}\right)\rightarrow (k,2^k) \mbox{ for all } n\geq k\geq 0.
\end{align}

Lov\'{a}sz \cite{L} conjectured and the first author \cite{F83} proved
\begin{align}\label{ineq-1.2}
\left(n,\left\lfloor \frac{n^2}{4}\right\rfloor+n+2\right)\rightarrow (3,7).
\end{align}

A family $\hf$ is called a {\it down-set} (or {\it complex}) if $F\in \hf$ always implies $2^F\subset \hf$. Both the above results are direct consequences of the following

\begin{lem}\label{lem-1.1}
If $\hf\not \rightarrow (a,b)$ for some family $\hf\subset 2^{[n]}$ then there is a down-set with the same property.
\end{lem}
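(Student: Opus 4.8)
The natural approach is the classical "down-shifting" (compression) argument. For each element $x\in[n]$ define the down-shift $D_x$ acting on a single set $F$ by $D_x(F) = F\setminus\{x\}$ if $x\in F$ and $F\setminus\{x\}\notin\hf$, and $D_x(F)=F$ otherwise; then extend to a family by $D_x(\hf) = \{D_x(F)\colon F\in\hf\}$. The first routine observations are that $|D_x(\hf)| = |\hf|$ (the map is injective on $\hf$ because if two sets collapse to the same image, one of them was already present and hence fixed), and that iterating $D_x$ over all $x\in[n]$, repeatedly, must terminate since the quantity $\sum_{F\in\hf}|F|$ is a non-negative integer that strictly decreases whenever some $D_x$ changes $\hf$. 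The family $\hf'$ obtained at the end is stable under every $D_x$, and a standard check shows a family stable under all down-shifts is a down-set: if $F\in\hf'$ and $x\in F$ but $F\setminus\{x\}\notin\hf'$, then $D_x$ would move $F$, contradiction; so $F\setminus\{x\}\in\hf'$ for every $x\in F$, and inductively $2^F\subset\hf'$.

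The crux is to verify that down-shifting does not create a large trace, i.e. that if $\hf\not\rightarrow(a,b)$ then $D_x(\hf)\not\rightarrow(a,b)$. Fix an $a$-set $Y\subset[n]$; I must show $|(D_x\hf)_{\mid Y}|\le b-1$. If $x\notin Y$ this is immediate, since $D_x$ only alters the coordinate $x$, so $(D_x\hf)_{\mid Y} \subseteq \hf_{\mid Y} \cup (\text{sets already in }\hf_{\mid Y})$; more precisely $(D_x\hf)_{\mid Y}\subseteq\hf_{\mid Y}$ because $D_x(F)\cap Y = F\cap Y$. The interesting case is $x\in Y$. Here the standard trick is to compare with the down-shift $D'_x$ performed inside $Y$: one shows that $(D_x\hf)_{\mid Y}$ is contained in $D'_x(\hf_{\mid Y})$, where $D'_x$ now acts on the family $\hf_{\mid Y}\subset 2^Y$. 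The containment requires a short case analysis on whether $x\in F\cap Y$ and whether $F\cap Y\setminus\{x\}$ lies in $\hf_{\mid Y}$. Since $D'_x$ preserves cardinality of a family on $Y$, we get $|(D_x\hf)_{\mid Y}|\le |D'_x(\hf_{\mid Y})| = |\hf_{\mid Y}| \le b-1$, as desired.

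I expect the main obstacle to be exactly the containment $(D_x\hf)_{\mid Y}\subseteq D'_x(\hf_{\mid Y})$ when $x\in Y$: one has to be careful because $D_x$ is defined using membership in the whole family $\hf$ on $[n]$, whereas $D'_x$ uses membership in the projected family on $Y$, and these two "membership" conditions are not literally the same set system. The resolution is to show that whenever a set $A\subset Y$ is in $(D_x\hf)_{\mid Y}$, either $A$ or $A\cup\{x\}$ witnesses $A\in D'_x(\hf_{\mid Y})$; chasing which preimages $F\in\hf$ map to $A$ under $D_x$ and projecting gives this. Everything else—injectivity, termination, and the fact that a shift-stable family is a down-set—is routine bookkeeping. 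Once the key inequality is in hand, the lemma follows by applying it along the terminating sequence of down-shifts and observing that $\not\rightarrow(a,b)$ is preserved at every step.
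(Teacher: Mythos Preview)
The paper does not actually supply a proof of Lemma~\ref{lem-1.1}; it is stated as a known fact and then used. Your proposal reproduces the standard down-shift (squash) argument, which is exactly the classical proof of this lemma (going back at least to \cite{F83}), and your outline is correct in all essential points: the down-shift $D_x$ preserves cardinality, the weight $\sum_{F\in\hf}|F|$ guarantees termination, a shift-stable family is a down-set, and the key inequality $|(D_x\hf)_{\mid Y}|\le |\hf_{\mid Y}|$ follows from the containment $(D_x\hf)_{\mid Y}\subseteq D'_x(\hf_{\mid Y})$ together with $|D'_x(\hf_{\mid Y})|=|\hf_{\mid Y}|$. Your case analysis for that containment (which you correctly flag as the only non-trivial step) goes through: when $A=D_x(F)\cap Y$ with $x\in Y$, either $A$ itself or $A\cup\{x\}=F\cap Y$ lies in $\hf_{\mid Y}$ and witnesses $A\in D'_x(\hf_{\mid Y})$.

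One small remark: as stated, the lemma only says ``a down-set with the same property'', and for its applications in the paper one needs the down-set to have the \emph{same cardinality} as $\hf$ (otherwise the lemma is vacuous, since $\{\emptyset\}$ would do). Your argument delivers exactly this, via $|D_x(\hf)|=|\hf|$; it is worth making that explicit in the final write-up.
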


\begin{example}
Let $\ell$ be a positive integer and $[n]=X_0\cup \ldots\cup X_{\ell-1}$ a partition with $|X_i|=\left\lfloor\frac{n+i}{\ell}\right\rfloor$, $0\leq i< \ell$.  Define
\[
\hf(n,\ell)=\{F\subset [n]\colon|F\cap X_i|\leq 1, 0\leq i<\ell\}.
\]
Clearly, $|\hf(n,\ell)|=\prod\limits_{0\leq i<\ell}\left(1+\left\lfloor\frac{n+i}{\ell}\right\rfloor\right)$ and for $Y\in \binom{[n]}{\ell+1}$, $|\hf(n,\ell)_{\mid Y}|\leq 3\cdot 2^{\ell-1}$ is easy to verify.

In particular, $|\hf(n,2)|=\lfloor\frac{n^2}{4}\rfloor+n+1$ shows that $(n,\lfloor\frac{n^2}{4}\rfloor+n+1)\not\rightarrow (3,7)$, i.e., the corresponding arrow relation does not hold.

For general $\ell$, the example shows that
\begin{align}\label{ineq-1.3}
\left(n,\prod_{0\leq i<\ell} \left\lfloor\frac{n+\ell+i}{\ell}\right\rfloor\right)\not\rightarrow (\ell+1,3\cdot 2^{\ell-1}+1).
\end{align}
As  \eqref{ineq-1.1} and \eqref{ineq-1.2} show \eqref{ineq-1.3} is best possible for $\ell=1$ and 2.
\end{example}

It is very limited evidence but let us make a conjecture for the general case.

\begin{conj}
\begin{align}\label{ineq-1.4}
\left(n,1+\prod_{0\leq i<\ell} \left\lfloor\frac{n+\ell+i}{\ell}\right\rfloor\right)\rightarrow (\ell+1,3\cdot 2^{\ell-1}+1)\mbox{ for  all } n>\ell>0.
\end{align}
\end{conj}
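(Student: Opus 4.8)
The plan is to first invoke Lemma~\ref{lem-1.1} to reduce \eqref{ineq-1.4} to the case where $\hf$ is a down-set, and then to observe that we may assume $\hf$ covers $[n]$ (if some singleton is missing, delete that point and apply induction on $n$, which wins with room to spare). For a down-set one has $\hf_{\mid Y}=\hf\cap 2^Y$, so the hypothesis ``$\hf\not\rightarrow(\ell+1,3\cdot 2^{\ell-1}+1)$'' becomes the statement that \emph{every $(\ell+1)$-set $Y$ contains at least $2^{\ell-1}$ non-faces of $\hf$}. Moreover every face of a down-set spans a clique in the graph $G:=\{\{x,y\}\colon\{x,y\}\in\hf\}$ of its $2$-element faces, so $|\hf|\le c(G)$, where $c(G)$ denotes the number of cliques of $G$ (counting $\emptyset$ and the singletons). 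The quantity $\prod_{0\le i<\ell}\lfloor\frac{n+\ell+i}{\ell}\rfloor$ appearing in \eqref{ineq-1.4} is precisely $c(T)$ for $T=T(n,\ell)$ the Tur\'an graph on $[n]$ with $\ell$ nearly equal parts, since $\lfloor\frac{n+\ell+i}{\ell}\rfloor=1+|X_i|$ and $\hf(n,\ell)$ is exactly the clique complex of $T$. Thus it suffices to prove $|\hf|\le c(T(n,\ell))$.

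I would then split according to whether $G$ contains a clique of size $\ell+1$. \emph{If $G$ is $K_{\ell+1}$-free}, then $|\hf|\le c(G)\le c(T(n,\ell))$ by Zykov's theorem, which says that among $K_{\ell+1}$-free graphs on $n$ vertices the Tur\'an graph simultaneously maximizes the number of copies of $K_s$ for every $s$, hence maximizes $c(\cdot)$. Alternatively this case can be done from scratch by induction on $n$: pick a vertex $v$ of minimum degree $d$, write $c(G)=c(G-v)+c\bigl(G[N_G(v)]\bigr)$ with $G[N_G(v)]$ a $K_\ell$-free graph on $d\le 2e(G)/n\le(1-\tfrac1\ell)n$ vertices (Tur\'an's edge bound), and check the resulting (routine) inequality on the floor functions.

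The real content is the case \emph{$G$ contains a $K_{\ell+1}$}. If $Y$ carries a $K_{\ell+1}$ of $G$ then all non-faces of $\hf$ inside $Y$ have size $\ge 3$, and the hypothesis forces at least $2^{\ell-1}$ of the $2^{\ell+1}-1-(\ell+1)-\binom{\ell+1}{2}$ subsets of $Y$ of size $\ge 3$ to be missing from $\hf$ --- a sizable ``deficiency'' attached to $Y$. My plan is a stability/supersaturation dichotomy: if $e(G)$ exceeds the Tur\'an number by $\Omega(n^2)$, then $G$ contains $\Omega(n^{\ell+1})$ copies of $K_{\ell+1}$ by supersaturation, so $\hf$ is short of many faces of size $\ge 3$, and a Kruskal--Katona--type accounting weighing these missing high faces against the clique numbers $k_s(G)$ should yield $|\hf|\le c(G)-(\text{deficiency})<c(T(n,\ell))$; whereas if $e(G)$ is within $o(n^2)$ of the Tur\'an number, the Erd\H{o}s--Simonovits stability theorem puts $G$ close to $T(n,\ell)$ and a local cleaning argument should remove its few surplus edges at a net loss in $|\hf|$. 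The hard part --- and the reason \eqref{ineq-1.4} is only conjectured for $\ell\ge 4$ --- is making this trade-off between the extra edges of $G$ and the forced loss of higher faces precise and uniform in $\ell$; for $\ell=3$ exactly this analysis (with the hypothesis $n\ge 25$) is the substance of the present paper, while for $\ell=2$ the case does not arise at all, since no $K_3$ is permitted and one simply recovers the triangle-free bound behind \eqref{ineq-1.2}.
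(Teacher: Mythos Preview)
The statement you are addressing is a \emph{conjecture}; the paper does not prove it in general, only the case $\ell=3$ with $n\ge 25$ (Theorem~\ref{thm-main}). So there is no ``paper's own proof'' of the full statement to compare against, and your proposal is --- as you yourself acknowledge at the end --- a plan rather than a proof.

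Your reduction to down-sets and the inequality $|\hf|\le c(G)$ for $G=\hf^{(2)}$ are correct, and your Case~1 (when $G$ is $K_{\ell+1}$-free) via Zykov's theorem is a clean and valid argument that genuinely disposes of that sub-case. It is also correct that for $\ell=2$ Case~2 cannot occur, so your scheme recovers \eqref{ineq-1.2}.

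However, Case~2 is a real gap, not a technicality. The bound $|\hf|\le c(G)$ can be arbitrarily slack once $G$ is dense --- e.g.\ if $\hf=\binom{[n]}{\le 2}$ then $G=K_n$, $c(G)=2^n$, yet $|\hf|=1+n+\binom{n}{2}$ --- so essentially all of the work must come from the deficiency count. Supersaturation yields many $(\ell+1)$-cliques, but the $2^{\ell-1}$ missing subsets attached to each such clique have sizes ranging from $3$ to $\ell+1$ and are shared among many cliques; converting this into a lower bound on the total number of non-faces that exceeds $c(G)-c(T(n,\ell))$ is exactly the unresolved difficulty. The stability branch is no easier: being $o(n^2)$-close to $T(n,\ell)$ in edge count does not by itself control the higher clique numbers tightly enough to extract the \emph{exact} bound, and ``local cleaning'' is not specified.

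Finally, your remark that ``for $\ell=3$ exactly this analysis \dots\ is the substance of the present paper'' is not accurate. The paper's proof of Theorem~\ref{thm-main} does \emph{not} proceed via clique counts, Zykov, supersaturation, or Erd\H{o}s--Simonovits stability. It instead uses a symmetrisation step (Claim~\ref{claim-3.1}) to reduce an extremal $\hf$ to one with a multipartite structure $[n]=Z_1\cup\cdots\cup Z_r$ controlled by an auxiliary system $\hh\subset 2^{[r]}$, shows that distinct triples of $\hh$ meet in at most one point (Claim~\ref{claim-2.4}), derives from extremality a uniform lower bound $|\hf^{(3)}(x)|>n^2/18$ on triple-degrees, and then eliminates each value $r\in\{4,5,6,7\}$ and $r\ge 8$ by explicit inequalities on the part sizes $b_i=|Z_i|$. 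The two approaches are therefore genuinely different; yours is more conceptual in Case~1 but is incomplete (and not what the paper does) already for $\ell=3$.
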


As we will see in the next section, \eqref{ineq-1.4} is closely related to some classical results. Our main result settles the $\ell=3$ case for $n\geq 25$.

\begin{thm}\label{thm-main}
\eqref{ineq-1.4} holds for $\ell=3$ and $n\geq 25$.
\end{thm}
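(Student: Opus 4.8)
The plan is to combine the reduction of Lemma~\ref{lem-1.1} with an extremal graph/hypergraph analysis, using the $\ell=2$ case \eqref{ineq-1.2} as a tool.

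By Lemma~\ref{lem-1.1} we may assume $\hf$ is a down-set, and we must show that a down-set $\hf\subset 2^{[n]}$ with $|\hf_{\mid C}|\leq 12$ for every $4$-set $C$ satisfies $|\hf|\leq |\hf(n,3)|$. Since $\hf$ is a down-set, $F\in\hf$ with $|F|\geq 4$ would force $2^F\subset\hf$, hence $|\hf_{\mid C}|=16$ for $C\in\binom{F}{4}$; so $\hf$ has no set of size $\geq 4$ and $|\hf|=1+n_1+e_2+e_3$, where $n_1$ is the number of singletons in $\hf$, $e_2$ the number of $2$-sets, $e_3$ the number of $3$-sets. Let $G$ be the graph of the $2$-sets of $\hf$ and $\hh$ the $3$-graph of the $3$-sets of $\hf$; being a down-set, every triple of $\hh$ has its three pairs in $G$, i.e. $\hh$ lies inside the triangle set of $G$. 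A short inspection shows that the down-sets on a $4$-set of size $\geq 13$ are exactly $2^{[4]}$ with the top set removed and at most two of the four $3$-subsets removed; consequently, writing $e_G(C),e_{\hh}(C)$ for the number of pairs of $G$, resp.\ triples of $\hh$, inside $C$, the condition $|\hf_{\mid C}|\leq 12$ is equivalent to $e_G(C)+e_{\hh}(C)\leq 7$ whenever all four singletons of $C$ lie in $\hf$ (the generic case; vertices missing as singletons only shrink $\hf$ and will be absorbed). Thus the essential task is to maximise $e_2+e_3$ subject to these local constraints.

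Next I would extract the structural content of $e_G(C)+e_{\hh}(C)\leq 7$. Two triples inside a $4$-set share a pair and their pairs force at least five edges of $G$ inside $C$, while three triples force all six; hence every $4$-set spans at most $2$ triples of $\hh$, and for each vertex $x$ the link $\hh_x=\{ab\colon xab\in\hh\}$ is a \emph{triangle-free} graph supported inside the $G$-neighbourhood $N_G(x)$ (a triangle in $\hh_x$ would give three triples inside a $4$-set). Therefore $3e_3=\sum_x |\hh_x|\leq \sum_x \mathrm{ex}(d_G(x),K_3)=\sum_x \lfloor d_G(x)^2/4\rfloor$, while $2e_2=\sum_x d_G(x)$. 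In the opposite direction the boundary case is transparent: if $G$ is $K_4$-free and $\hh$ is the set of \emph{all} triangles of $G$, then every $4$-set carries at most $5$ edges and at most $2$ triangles, so $e_G(C)+e_{\hh}(C)\leq 7$ holds automatically, and $|\hf|=1+n+e(G)+t(G)$ with $t(G)$ the number of triangles of $G$. Since the Tur\'an graph $T(n,3)=K_{X_1,X_2,X_3}$ simultaneously maximises the number of edges (Tur\'an) and the number of triangles (Zykov) among $K_4$-free graphs, and $1+n+e(T(n,3))+t(T(n,3))=\prod_i(1+|X_i|)=|\hf(n,3)|$, the construction $\hf(n,3)$ is recovered exactly in this regime.

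It remains to show that leaving this regime cannot help: neither (a) allowing a $K_4$ in $G$ nor (b) keeping $\hh$ strictly inside the triangle set of a non-$K_4$-free $G$ can push $e_2+e_3$ above $e(T(n,3))+t(T(n,3))$. Case (b) is handled by the link bound above together with a Tur\'an/Zykov-type estimate for $\sum_x d_G(x)^2$ among $K_4$-free graphs; case (a) is the heart of the matter, since a $K_4$ on $\{a,b,c,d\}$ contributes $6$ to $e_2$ but, as $e_G(\{a,b,c,d\})=6$ already forces $e_{\hh}(\{a,b,c,d\})\leq 1$, costs at least three triples, and one must globalise this local trade-off (e.g.\ by deleting one edge from each $K_4$ and tracking the net change in $e_2$ plus the number of admissible triples, or by a stability argument against $T(n,3)$). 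I expect this global accounting — together with the exact comparison of $\sum_x\lfloor d_G(x)^2/4\rfloor$ with $\prod_i(1+|X_i|)$, which is where the hypothesis $n\geq 25$ enters as the threshold beyond which the balanced $3$-partite configuration beats all near-extremal competitors (cliques on few vertices, unbalanced partitions, and so on) — to be the main obstacle. A useful auxiliary device throughout is the vertex link $\hf(x)=\{F\setminus x\colon x\in F\in\hf\}$: from $|\hf(x)_{\mid C'}|=|\hf_{\mid C'\cup x}|-|\hf_{\mid C'}|$ one checks $|\hf(x)_{\mid C'}|\leq 6$ for every $3$-set $C'$, so $\hf(x)$ has no $3$-set and $|\hf(x)|\leq |\hf(n-1,2)|=\lfloor (n-1)^2/4\rfloor+n$ by \eqref{ineq-1.2}; combined with $|\hf|=|\hf(x)|+|\{F\in\hf\colon x\notin F\}|$ this supports an induction on $n$ once $x$ is chosen (say of minimum $G$-degree) so that the two summands are controlled jointly rather than separately.
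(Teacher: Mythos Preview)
Your setup is correct and the reduction to maximising $e_2+e_3$ under the local constraint $e_G(C)+e_{\hh}(C)\leq 7$ is clean. The observation that the $K_4$-free regime with $\hh=\{\text{all triangles of }G\}$ recovers $\hf(n,3)$ exactly via Tur\'an plus Zykov is also right, and the triangle-freeness of the links $\hh_x$ is a genuine structural consequence. But the proposal is not a proof: you explicitly identify case~(a), where $G$ contains a $K_4$, as ``the heart of the matter'' and ``the main obstacle'', and you do not carry it out. The local trade-off you describe (a $K_4$ forces $e_{\hh}(C)\leq 1$, hence ``costs three triples'') does not globalise in any evident way, since copies of $K_4$ can overlap arbitrarily and the lost triples are shared between them; the deletion heuristic and the link-based induction via $|\hf(x)|\leq\lfloor(n-1)^2/4\rfloor+n$ are both far too weak on their own (the latter gives only an $O(n^3/8)$ bound, not $n^3/27$). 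Your case~(b) as written is also garbled: you say $G$ is ``non-$K_4$-free'' but then invoke a $K_4$-free estimate for $\sum d_G(x)^2$; presumably you meant $K_4$-free, in which case (b) is trivial since one may simply enlarge $\hh$ to all triangles.

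The paper's proof avoids the $K_4$ difficulty altogether by a different mechanism: a \emph{symmetrisation} step. If some pair $\{x,y\}$ lies in no member of $\hf$, one replaces the link of $y$ by a copy of the link of $x$ without creating a bad $4$-set (Claim~\ref{claim-3.1}); iterating, and choosing $\hf$ extremal with the \emph{fewest} resulting equivalence classes, one forces $\hf$ to be a blow-up of a structure $\hh\subset 2^{[r]}$ on $r$ parts $Z_1,\ldots,Z_r$, with $\binom{[r]}{2}\subset\hh$ and any two triples of $\hh^{(3)}$ meeting in at most one point. The remainder is a finite case analysis on $r$: the minimality of $r$ combined with link-size lower bounds (Claims~\ref{claim-3.4}--\ref{claim-2.5}) rules out $r=4,5,6,7$ and $r\geq 8$, leaving $r=3$ and hence $\hf\cong\hf(n,3)$. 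The threshold $n\geq 25$ enters only in the numerical comparison for $r=7$. This structural reduction is the key idea your outline lacks.
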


In view of Lemma \ref{lem-1.1} to check the veracity of \eqref{ineq-1.4} we can restrict ourselves to down-sets. Moreover, we may assume that $\hf\subset 2^{[n]}$ contains no members of size exceeding $\ell$. We shall use these facts without further mention.

We need the following notations:
$$\hf(i)=\{F\setminus\{i\}\colon i\in F\in \hf\}, \ \hf(\bar{i})= \{F\in\hf: i\notin F\}.$$
Note that $|\hf|=|\hf(i)|+|\hf(\bar{i})|$. For $i,j\in [n]$, we also use
\[
\hf(i,j)=\{F\setminus\{i,j\}\colon \{i,j\}\subset F\in \hf\}, \ \hf(\bar{i},\bar{j})=\{F\in \hf\colon F\cap \{i,j\}=\emptyset\}.
\]
For $\hf\subset 2^{[n]}$, let $\hf^{(\ell)}$ denote the subfamily $\{F\in \hf\colon |F|=\ell\}$.

\section{Cancellative families}

Let us recall that an $\ell$-graph $\hh\subset \binom{[n]}{\ell}$ is called {\it cancellative} if $\hh$ contains no three edges $H_1,H_2,H_3$ such that $|H_1\cap H_2|=\ell-1$ and  $H_1\bigtriangleup H_2\subset H_3$ where $\bigtriangleup$ denotes the symmetric difference.

\begin{claim}\label{claim-2.1}
If $\hf\subset 2^{[n]}$ is a down-set and $\hf^{(\ell)}$ is not cancellative, then $\hf\rightarrow (\ell+1,3\cdot 2^{\ell-1}+1)$.
\end{claim}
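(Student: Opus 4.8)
The plan is to exploit the non-cancellative configuration directly to build a $4$-set $Y$ on which $\hf$ traces $\geq 3\cdot 2^{\ell-1}+1$ sets. Since $\hf^{(\ell)}$ is not cancellative, there are edges $H_1,H_2,H_3\in\hf^{(\ell)}$ with $|H_1\cap H_2|=\ell-1$ and $H_1\triangle H_2\subset H_3$. Write $H_1=D\cup\{x\}$ and $H_2=D\cup\{y\}$ with $|D|=\ell-1$, $x\neq y$, and note $\{x,y\}=H_1\triangle H_2\subset H_3$. First I would dispose of the degenerate case $x,y\in D$... which cannot happen; more to the point, I must handle whether $H_3\supseteq D$ or not. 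If $H_3\supseteq D$ then $H_3=D\cup\{x,y\}$ would have size $\ell+1$, impossible, so $H_3$ misses some $d\in D$; fix such a $d$. Now consider the $4$-set $Y=\{x,y,d,z\}$, where $z$ is a further element to be chosen (for $\ell=3$ we need $|D|=2$, so $Y$ is already determined once we also throw in the second element of $D$; in general one picks $z$ inside $D\setminus\{d\}$, which is nonempty when $\ell\geq 3$, and for $\ell=2$ one argues separately as in \eqref{ineq-1.2}).

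The key step is then a counting argument on $\hf_{\mid Y}$. Because $\hf$ is a down-set, every subset of $H_1$, $H_2$, and $H_3$ lies in $\hf$, so $\hf_{\mid Y}$ contains the trace of $2^{H_1}\cup 2^{H_2}\cup 2^{H_3}$ on $Y$. One computes this trace explicitly: $2^{H_1}$ restricted to $Y$ gives all subsets of $\{x,d,z\}$ (since $H_1\cap Y=\{x,d,z\}$ in the $\ell=3$ picture), contributing $8$ sets; $2^{H_2}$ restricted to $Y$ gives all subsets of $\{y,d,z\}$, of which $\{y\},\{y,d\},\{y,z\},\{y,d,z\}$ are new, contributing $4$ more; that already yields $12$. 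The point is that $H_3$ supplies a thirteenth: since $\{x,y\}\subset H_3$ and $d\notin H_3$, the set $H_3\cap Y\supseteq\{x,y\}$ and avoids $d$, so $H_3\cap Y\in\{\{x,y\},\{x,y,z\}\}$, and in either case this is a set containing both $x$ and $y$ — none of the $12$ sets already counted contains both. Hence $|\hf_{\mid Y}|\geq 13=3\cdot 2^{2}+1$, which is the claim for $\ell=3$; the same bookkeeping with $D\setminus\{d\}$ in place of $\{z\}$ and the full $2^{H_i}$ traces gives $3\cdot 2^{\ell-1}+1$ in general.

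The main obstacle is making sure the three sets $H_1\cap Y$, $H_2\cap Y$, $H_3\cap Y$ are genuinely distinct and positioned so that their down-closures overlap in exactly the predicted way — i.e. verifying that $H_3$ really does contribute a new trace set rather than one already generated by $H_1$ or $H_2$. This hinges entirely on the choice of $d\in D\setminus H_3$, which forces every subset of $H_3$ meeting $Y$ to avoid $d$, while the thirteenth set we exhibit (one containing $\{x,y\}$) is new precisely because $H_1\cap Y$ and $H_2\cap Y$ each omit one of $x,y$. A secondary nuisance is the boundary behaviour for small $\ell$ (the case $\ell=2$ is not covered by the "pick $z\in D\setminus\{d\}$" recipe and must be cited from the known result), and one should double-check that for $\ell=3$ the $4$-set $Y=\{x,y\}\cup D$ indeed has size exactly $4$, i.e. that $x,y\notin D$, which holds since $x,y$ are the symmetric-difference elements. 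With these checks in place the argument is complete.
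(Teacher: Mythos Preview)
Your argument is correct and in fact uses the very same witness set as the paper: once you unpack your construction, $Y=\{x,y\}\cup D=H_1\cup H_2$ is an $(\ell+1)$-set (not a ``$4$-set'' as your opening line says), and your count of $3\cdot 2^{\ell-1}$ from $2^{H_1}\cup 2^{H_2}$ matches the paper's exactly.

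Where you diverge is in producing the extra trace set. You work to locate $d\in D\setminus H_3$ and then exhibit $H_3\cap Y$ as the new set; the paper instead just takes $\{x,y\}=H_1\triangle H_2$ itself, which lies in $\mathcal F$ (it is a subset of $H_3$ and $\mathcal F$ is a down-set), lies in $2^Y$, and is in neither $2^{H_1}$ nor $2^{H_2}$ since each of those omits one of $x,y$. This one-line observation makes the whole business with $d$ superfluous --- the existence of $d$ is true but never used --- and, more to the point, the argument then works uniformly for every $\ell\geq 2$, so your worry about the $\ell=2$ boundary case evaporates.
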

\begin{proof}
Choose $F_1,F_2,F_3\in \hf^{(\ell)}$ such that $|F_1\cap F_2|=\ell-1$ and $F_1\bigtriangleup F_2 \subset F_3$. Set $Y=F_1\cup F_2$. Then $|Y|=\ell+1$ and both $2^{F_1}$ and $2^{F_2}$ are contained in $\hf_{\mid Y}$. Note that $|2^{F_1}\cup 2^{F_2}|=2\cdot 2^{\ell}-2^{\ell-1}=3\cdot 2^{\ell-1}$. Since the 2-element set $F_1\bigtriangleup F_2$ is in $2^{Y}\setminus (2^{F_1}\cup 2^{F_2})$ and $F_1\bigtriangleup F_2\subset F_3$, $F_1\bigtriangleup F_2\in \hf_{\mid Y}$ as well. Thus $|\hf_{\mid Y}|\geq 3\cdot 2^{\ell-1}+1$.
\end{proof}

The following statement was proved for $\ell=2$ by Mantel \cite{M}, for $\ell=3$ by Bollob\'{a}s \cite{bollobas} and for $\ell=4$ by Sidorenko \cite{Si}.

\begin{thm}
Let $2\leq \ell \leq 4$ and $\hh\subset \binom{[n]}{\ell}$. If $\hh$ is cancellative then
\begin{align}\label{ineq-1.5}
|\hf| \leq \prod_{0\leq i<\ell} \left\lfloor \frac{n+i}{\ell} \right\rfloor.
\end{align}
\end{thm}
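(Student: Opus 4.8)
This theorem packages Mantel's, Bollob\'as's and Sidorenko's theorems, and I would try to handle all of $2\le\ell\le4$ by one symmetrization scheme, the case $\ell=2$ being Zykov's proof of Mantel's theorem. Fix a cancellative $\hh\subset\binom{[n]}{\ell}$ with $|\hh|$ maximum, and for $v\in[n]$ write $L_v=\{e\in\binom{[n]\setminus\{v\}}{\ell-1}\colon\{v\}\cup e\in\hh\}$ for its link. The starting remark is that $L_v$ is itself a cancellative $(\ell-1)$-graph: a forbidden triple $e_1,e_2,e_3\in L_v$ with $|e_1\cap e_2|=\ell-2$ and $e_1\bigtriangleup e_2\subset e_3$ lifts to $\{v\}\cup e_1,\{v\}\cup e_2,\{v\}\cup e_3\in\hh$, which meet in $\ell-1$ vertices and satisfy $(\{v\}\cup e_1)\bigtriangleup(\{v\}\cup e_2)=e_1\bigtriangleup e_2\subset\{v\}\cup e_3$, contradicting cancellativity of $\hh$.

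The heart of the plan is a symmetrization step. Suppose $u\ne v$ lie in no common edge of $\hh$ (equivalently $u$ is isolated in $L_v$) and $|L_v|\ge|L_u|$; delete every edge of $\hh$ through $u$ and insert instead the $\ell$-sets $\{u\}\cup e$, $e\in L_v$. This does not decrease $|\hh|$, and I claim the new family $\hh'$ is still cancellative. To see this I would take a putative forbidden triple $H_1,H_2,H_3\in\hh'$ and split according to how many of them contain $u$. If all three do, then $H_1\setminus\{u\},H_2\setminus\{u\},H_3\setminus\{u\}\in L_v$ form a forbidden triple in $L_v$, impossible by the previous paragraph. In every remaining case, replacing each $H_i=\{u\}\cup e_i$ passing through $u$ by $\{v\}\cup e_i\in\hh$ turns the configuration into a genuine forbidden triple of $\hh$ (for instance, if only $H_1,H_2$ contain $u$ then $\{v\}\cup e_1$ and $\{v\}\cup e_2$ meet in $\ell-1$ vertices and have symmetric difference $e_1\bigtriangleup e_2=H_1\bigtriangleup H_2\subset H_3\in\hh$). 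After the step $u$ and $v$ have identical links.

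Iterating, one should reach a maximum cancellative $\ell$-graph $\hh$ in which ``lies in no common edge'' is an equivalence relation on $[n]$, say with classes $X_1,\dots,X_t$; every edge is then a partial transversal of this partition. A short argument with weak cancellativity --- a partial transversal meeting $\ell+1$ of the classes would contain a forbidden triple, whereas with fewer than $\ell$ classes $\hh$ would be empty --- should pin $\hh$ down as the complete $\ell$-partite $\ell$-graph on an $\ell$-partition, so that $|\hh|=|X_1|\cdots|X_\ell|$. The standard smoothing argument (if two part sizes differ by at least $2$, moving one element from the larger part to the smaller one increases the product) then gives $|\hh|\le\prod_{0\le i<\ell}\lfloor\frac{n+i}{\ell}\rfloor$, with equality for $\hf(n,\ell)^{(\ell)}$.

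I expect the termination of the symmetrization --- the passage to the equivalence-relation configuration and the exact count of classes --- to be the main obstacle. Exactly as in Mantel's theorem, a single step need not improve any obvious monotone parameter once $|L_u|=|L_v|$, so one cannot merely iterate greedily but must instead fix, among all maximum cancellative $\ell$-graphs, one minimizing a suitable secondary quantity and check that a symmetrization step strictly decreases it without creating too many new ``bad'' pairs (vertices in no common edge but with distinct links); and even after that one must rule out that a maximal cancellative subfamily of the partial transversals on $\ell+1$ or more parts beats the balanced $\ell$-partite family. This bookkeeping is precisely where the arguments of Bollob\'as for $\ell=3$ and of Sidorenko for $\ell=4$ become intricate --- Sidorenko's notably so --- so in practice one may simply invoke those two theorems; and for the stronger form of cancellativity (with $|H_1\cap H_2|$ unrestricted) the analogous statement for $\ell\ge5$ is a well-known open conjecture of Bollob\'as, so one should not expect the scheme to trivialize in general.
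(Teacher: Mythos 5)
The paper does not prove this statement: it is quoted outright as a package of Mantel's theorem ($\ell=2$), Bollob\'as's theorem ($\ell=3$), and Sidorenko's theorem ($\ell=4$), with citations to those papers and no argument of its own. So there is no ``paper's proof'' to match your attempt against, and the honest baseline is simply the citation, which you also arrive at in your final paragraph.

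As for the symmetrization sketch itself: the concrete steps you do write out are sound. The link $L_v$ of a cancellative $\ell$-graph is a cancellative $(\ell-1)$-graph for the reason you give, and replacing the star of $u$ by a clone of $L_v$ (when $u,v$ share no edge) does preserve cancellativity --- I checked all the case splits by how many of $H_1,H_2,H_3$ pass through $u$; the only subtle point, that the extra vertex $w$ in $H_2=e_1\cup\{w\}$ cannot equal $v$, follows because $w\in e_3\in L_v$ and $L_v\subset\binom{[n]\setminus\{v\}}{\ell-1}$. However, you correctly identify the two genuine gaps. First, termination: Zykov's argument for $\ell=2$ finishes via the double-symmetrization trick in which the edge $uw$ supplies the extra ``$+1$'' edge, and there is no analogous gain for $\ell\geq 3$, so one cannot conclude by contradiction from maximality alone. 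Second, classification: after symmetrization the admissible ``type'' $\ell$-graphs on $t\geq\ell$ classes are linear and cover every pair (for $\ell=3$, Steiner triple systems), and one must still compare $\sum_{H}\prod_{i\in H}|X_i|$ against the balanced $\ell$-partite product --- a calculation that is not short, and is precisely where Bollob\'as and Sidorenko do their real (and non-symmetrization) work. Your concluding recommendation --- invoke those theorems rather than reprove them --- is exactly what the paper does, so the proposal is in effect correct as a sourcing strategy, while the symmetrization sketch on its own is an incomplete attempt whose gaps you have honestly flagged.
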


Let us suppose that $\ell=3$ and $\hf\subset 2^{[n]}$ is a down-set with $\hf\not\rightarrow (4,13)$. Then $\hf^{(k)}=\emptyset$ for $k\geq 4$ and by \eqref{ineq-1.5} $|\hf^{(3)}|\leq \lfloor\frac{n+2}{3}\rfloor\lfloor\frac{n+1}{3}\rfloor\lfloor\frac{n}{3}\rfloor$. Consequently,
\[
|\hf| \leq \left\lfloor\frac{n+2}{3}\right\rfloor\left\lfloor\frac{n+1}{3}\right\rfloor\left\lfloor\frac{n}{3}\right\rfloor
+\binom{n}{2}+\binom{n}{1}+\binom{n}{0}.
\]
That is,
\begin{align}
\left(n,\left\lfloor\frac{n+2}{3}\right\rfloor\left\lfloor\frac{n+1}{3}\right\rfloor\left\lfloor\frac{n}{3}\right\rfloor
+\binom{n}{2}+n+2\right)\rightarrow (4,13).
\end{align}
This shows that \eqref{ineq-1.4} is ``asymptotically" true for $\ell=3$.

Similarly, the $\ell=4$ case of \eqref{ineq-1.5} and Lemma \ref{lem-1.1} imply
\begin{align}
\left(n,\prod_{0\leq i<4}\left\lfloor\frac{n+i}{4}\right\rfloor+\binom{n}{3}+\binom{n}{2}+n+2\right)\rightarrow (5,25).
\end{align}

Unfortunately, \eqref{ineq-1.5} is no longer true for $\ell\geq 5$. In particular for $\ell=5$ and 6 Frankl and F\"{u}redi \cite{FF} showed that the maximum possible size $m(n,\ell)$ of a cancellative family $\hf\subset \binom{[n]}{\ell}$ satisfies
\begin{align*}
&m(n,5) \leq \frac{6}{11^4}n^5 \mbox{ with equality iff } 11|n \mbox{ and }\\[5pt]
&m(n,6) \leq \frac{11}{12^5}n^6 \mbox{ with equality iff } 12|n,
\end{align*}
which is much larger than $(n/\ell)^{\ell}$.

Let us define $m^*(n,\ell)$ as the maximum size of $\hf\subset \binom{[n]}{\ell}$ where $\hf$ contains no three distinct edges satisfying $F_1\bigtriangleup F_2\subset F_3$. Unlike with cancellative families, we do not require $|F_1\cap F_2|=\ell-1$. Thus $m^*(n,\ell)\leq m(n,\ell)$.

Katona conjectured $m^*(n,\ell)=\prod\limits_{0\leq i<\ell} \left\lfloor\frac{n+i}{\ell}\right\rfloor$. However, Shearer \cite{Sh} disproved this conjecture for $\ell>10$.

%

\section{Proof of Theorem \ref{thm-main}}

We need the following inequality.

\begin{lem}
Let $a_1,a_2,\ldots,a_m\geq 0$. Then
\begin{align}\label{ineq-key0}
\prod_{1\leq i<j\leq m} a_ia_j \leq \frac{m-1}{2m}\left(\sum_{1\leq i\leq m}a_i\right)^2.
\end{align}
\end{lem}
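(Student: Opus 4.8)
I take the left-hand side of \eqref{ineq-key0} to be the sum $\sum_{1\le i<j\le m}a_ia_j$ rather than a literal product over pairs (with a genuine product the claim already fails at $m=3$: for $a_1=a_2=a_3=2$ the left side is $64$ and the right side is $12$). Under this reading the plan is to obtain the bound in essentially one line from the classical identity
\[
\Bigl(\sum_{1\le i\le m}a_i\Bigr)^{2}=\sum_{1\le i\le m}a_i^{2}+2\sum_{1\le i<j\le m}a_ia_j
\]
together with the elementary estimate $\sum_{1\le i\le m}a_i^{2}\ge\frac1m\bigl(\sum_{1\le i\le m}a_i\bigr)^{2}$ (Cauchy--Schwarz against the all-ones vector).

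Concretely, I would set $S=\sum_{1\le i\le m}a_i$; if $S=0$ all $a_i$ vanish and \eqref{ineq-key0} is trivial, so assume $S>0$. Rearranging the identity gives $\sum_{1\le i<j\le m}a_ia_j=\tfrac12\bigl(S^{2}-\sum_{1\le i\le m}a_i^{2}\bigr)$, and inserting $\sum_{1\le i\le m}a_i^{2}\ge S^{2}/m$ yields
\[
\sum_{1\le i<j\le m}a_ia_j\le\tfrac12\Bigl(S^{2}-\tfrac{S^{2}}{m}\Bigr)=\frac{m-1}{2m}\,S^{2},
\]
which is \eqref{ineq-key0}. The estimate $\sum_i a_i^{2}\ge S^{2}/m$ can itself be written without invoking Cauchy--Schwarz, via $m\sum_{1\le i\le m}a_i^{2}-S^{2}=\sum_{1\le i<j\le m}(a_i-a_j)^{2}\ge0$, which simultaneously shows that equality in \eqref{ineq-key0} holds exactly when $a_1=\cdots=a_m$ --- consistent with the parts $X_i$ of the extremal family $\hf(n,\ell)$ being as balanced as possible.

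There is no real obstacle here: the lemma is a self-contained algebraic fact, and the only matters needing (minimal) attention are getting the direction of the Cauchy--Schwarz step right and disposing of the degenerate case $S=0$. The one thing worth keeping in mind for the sequel is that the constant $\frac{m-1}{2m}$ is sharp and is attained at the balanced point, so when this lemma is later applied --- presumably to control a sum of products $a_ia_j$ arising from splitting off a coordinate of the down-set --- it will be tight precisely against the conjectured extremal configuration.
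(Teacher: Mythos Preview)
Your proof is correct and follows essentially the same route as the paper: both rewrite the pair-sum via the identity $\sum_{i<j}a_ia_j=\tfrac12\bigl(S^2-\sum_i a_i^2\bigr)$ and then apply the inequality $\sum_i a_i^2\ge S^2/m$. The only cosmetic difference is that the paper phrases this last step as Jensen's inequality for $x^2$ while you phrase it as Cauchy--Schwarz (or the sum-of-squares identity), which are equivalent here.
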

\begin{proof}
Note that
\begin{align}\label{ineq-3.3}
\prod_{1\leq i<j\leq m} a_ia_j =\frac{1}{2}\left(\left(\sum_{1\leq i\leq m}a_i\right)^2-\sum_{1\leq i\leq m} a_i^2\right).
\end{align}
Since $x^2$ is convex, by    Jensen's inequality
\[
\frac{1}{m}\sum_{1\leq i\leq m} a_i^2\geq \left(\frac{1}{m}\sum_{1\leq i\leq m} a_i\right)^2=\frac{1}{m^2}\left(\sum_{1\leq i\leq m} a_i\right)^2.
\]
It follows that
\[
\sum\limits_{1\leq i\leq m} a_i^2 \geq \frac{1}{m}\left(\sum\limits_{1\leq i\leq m} a_i\right)^2.
\]
By \eqref{ineq-3.3} we conclude that \eqref{ineq-key0} holds.
\end{proof}

\begin{proof}[Proof of Theorem \ref{thm-main}]
Let $\hf\subset 2^{[n]}$ be a down-set satisfying  $\hf\not \rightarrow (4,13)$ and $|\hf|$ is maximal. Clearly, $|\hf|\geq |\hf(n,3)|= \lfloor\frac{n+3}{3}\rfloor\lfloor\frac{n+4}{3}\rfloor\lfloor\frac{n+5}{3}\rfloor$.

We showed that $\hf^{(3)}$ is cancellative however we are not going to use the bound \eqref{ineq-1.5}.

\begin{claim}\label{claim-3.1}
Let $\hf'$ be a family obtained from $\hf$ by removing all edges $F\in \hf$ with $y\in F$ and adding the edges $\{y\}\cup G$ for $G\in \hf(x,\bar{y})$. Then $\hf'\not\rightarrow (4,13)$.
\end{claim}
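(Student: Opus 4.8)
\textbf{Plan of proof for Claim \ref{claim-3.1}.}

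The operation described is a standard \emph{shifting} (or \emph{compression}) move: it replaces, for a fixed pair $x,y$, the part of $\hf$ containing $y$ by translates of $\hf(x,\bar y)$ sitting on $y$. Crucially $\hf'(\bar y)=\hf(\bar y)$ is untouched, and $|\hf'(y)|=|\hf(x,\bar y)|\le|\hf(\bar y)|$; combined with the fact that $\hf$ is a maximal down-set (so we may assume the shift does not increase $|\hf|$), one gets $|\hf'|\le|\hf|$, and in fact $|\hf'|=|\hf|$ if and only if $|\hf(x,\bar y)|=|\hf(\bar y)|$, which will be the case of interest. The plan is to show the move cannot create a $4$-set $C$ with $|\hf'_{\mid C}|\ge 13$, arguing by contradiction: suppose $|\hf'_{\mid C}|\ge13$ for some $C\in\binom{[n]}{4}$.

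First I would reduce to the case $y\in C$, $x\notin C$: if $y\notin C$ then every trace on $C$ comes either from $\hf(\bar y)\subset\hf$ or, for a newly added set $\{y\}\cup G$, from $G\in\hf(x,\bar y)\subset\hf$ — either way $\hf'_{\mid C}\subset\hf_{\mid C}$, contradicting $\hf\not\rightarrow(4,13)$. If $\{x,y\}\subset C$, a slightly more careful bookkeeping (using that $\hf$ is a down-set, so membership of sets not meeting $\{x,y\}$ is unaffected and sets containing $x$ but not $y$ survive) again forces $|\hf'_{\mid C}|\le|\hf_{\mid C}|$. So assume $y\in C$, $x\notin C$; write $C=\{y\}\cup D$ with $|D|=3$ and $x\notin D$.

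Now split $\hf'_{\mid C}$ into the $y$-containing traces and the rest. The $y$-free traces in $\hf'_{\mid C}$ all lie in $\hf_{\mid D}$, a family on a $3$-set, so there are at most $8$ of them. The $y$-containing traces are $\{y\}\cup E$ with $E\in(\hf'(y))_{\mid D}=(\hf(x,\bar y))_{\mid D}=\hf_{\mid D\cup\{x\}}(x)$, i.e.\ the link of $x$ in the trace of $\hf$ on the $4$-set $D\cup\{x\}$. If $|\hf'_{\mid C}|\ge13$ we need at least $5$ such $E$'s. But the trace of $\hf$ on the $4$-set $C'=D\cup\{x\}$ has $|\hf_{\mid C'}|\le 12$, and it decomposes as $\hf_{\mid C'}(\bar x)\cup(\{x\}\cup\hf_{\mid C'}(x))$; having $\ge5$ sets in the link $\hf_{\mid C'}(x)$ together with $\ge$ enough $y$-free traces on $D$ will be shown to force $|\hf_{\mid C'}|\ge13$ \emph{or} $|\hf_{\mid D}|$ large enough that one of the $4$-sets $C$ or $C'$ already violates the hypothesis on $\hf$. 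Concretely, one checks the inequality $|\hf'_{\mid C}|\le \max\bigl(|\hf_{\mid C}|,\,|\hf_{\mid C'}|\bigr)$ by a direct set-counting argument: every set counted on the left injects into one of the two families on the right, after using the down-set property of $\hf$ to lift traces (a subset of $D$ appearing in $\hf_{\mid C}$ lifts to a set of $\hf$ avoiding $x$, hence appears in $\hf_{\mid C'}(\bar x)$; a set $\{y\}\cup E$ on the left lifts via $E\in\hf(x,\bar y)$ to $\{x\}\cup E\in\hf$, contributing $E$ to $\hf_{\mid C'}(x)$ and also $\{y\}\cup E$-type information recoverable in $\hf_{\mid C}$).

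The main obstacle is exactly this last counting step: making precise the injection that shows $|\hf'_{\mid C}|\le\max(|\hf_{\mid C}|,|\hf_{\mid C'}|)\le12<13$. One must handle the overlap between $y$-free and $y$-containing traces carefully and exploit that $\hf$ is a \emph{down-set} (so that a trace set can always be lifted to an actual member of $\hf$ of the same support, and deletions respect nesting). Once that bound is established the claim is immediate, since it contradicts $\hf\not\rightarrow(4,13)$ applied to whichever of $C,C'$ attains the maximum.
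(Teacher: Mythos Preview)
Your case split is right, but two of the three cases are mishandled, and the paper's argument is much shorter than what you are attempting.

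\textbf{The case $\{x,y\}\subset C$.} Your plan is to show $|\hf'_{\mid C}|\le |\hf_{\mid C}|$. This is false in general. Take $C=\{x,y,a,b\}$, let $\hf=2^{\{x,a,b\}}\cup\{\{y\}\}$ (a down-set with $|\hf_{\mid C}|=9$); then $\hf(x,\bar y)=2^{\{a,b\}}$ and the operation yields $|\hf'_{\mid C}|=12$. The paper does not compare with $\hf_{\mid C}$ at all here: it simply notes that by construction no member of $\hf'$ contains both $x$ and $y$ (i.e.\ $\hf'(x,y)=\emptyset$), so every trace on $C$ misses $x$ or misses $y$, whence
\[
|\hf'_{\mid C}|\le 2^{|C\setminus\{x\}|}+2^{|C\setminus\{y\}|}-2^{|C\setminus\{x,y\}|}=8+8-4=12.
\]

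\textbf{The case $y\in C$, $x\notin C$.} You already identified $C'=D\cup\{x\}$ and observed that the $y$-containing traces of $\hf'$ on $C$ are in bijection with $\hf_{\mid C'}(x)$. What you are missing is that the other halves also match \emph{exactly}: since $\hf$ is a down-set, the $y$-free traces of $\hf'$ on $C$ are $\hf(\bar y)_{\mid D}=\hf\cap 2^{D}$, and the $x$-free traces of $\hf$ on $C'$ are $\hf(\bar x)_{\mid D}=\hf\cap 2^{D}$ as well. Hence
\[
|\hf'_{\mid C}|=|\hf\cap 2^{D}|+|\hf_{\mid C'}(x)|=|\hf_{\mid C'}|,
\]
an equality, not merely your target inequality $|\hf'_{\mid C}|\le\max(|\hf_{\mid C}|,|\hf_{\mid C'}|)$. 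The ``injection into one of the two families'' you sketch would not prove a $\max$ bound anyway (element-by-element injections into a union bound by the sum, not the max); but it is moot, since one line gives equality with $|\hf_{\mid C'}|\le 12$.

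So the proof is a two-line argument once you see (i) $\hf'(x,y)=\emptyset$ handles $x\in C$, and (ii) swapping $y\leftrightarrow x$ gives $|\hf'_{\mid C}|=|\hf_{\mid C'}|$ when $x\notin C$. Your introductory remarks about maximality and $|\hf'|\le|\hf|$ are irrelevant to the claim, which concerns only the trace property.
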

\begin{proof}
 Indeed, otherwise let $C$ be a 4-set satisfying $|\hf'_{\mid C} |\geq 13$. Then clearly $y\in C$. If $x\in C$, then by $\hf'(x,y)=\emptyset$
\[
|\hf'_{\mid C} | \leq 2^{|C\setminus\{x\}|}+2^{|C\setminus\{y\}|}-2^{|C\setminus\{x,y\}|}=2^3+2^3-2^2=12,
\]
a contradiction. Thus $x\notin C$. Setting $C'=(C\setminus \{y\})\cup \{x\}$,  $|\hf_{\mid C'}|=|\hf'_{\mid C}|\geq 13$, a contradiction again.
\end{proof}

There are two simple conditions to guarantee for a 4-set $C$ (with respect to a family $\hf$) that  $|\hf_{\mid C} |\leq 12$.

\begin{itemize}
  \item[(i)] $\exists \{x,y\}\in \binom{C}{2}$ such that no $F\in \hf$ contains $\{x,y\}$.
  \item[(ii)] $\exists \{x,y\}\in \binom{C}{2}$ such that $\hf(x)=\hf(y)$.
\end{itemize}
Note that if $\{x,y\}\subset F\in \hf$ then $F\setminus \{x\}\in \hf(x)$ but $F\setminus \{x\}\notin \hf(y)$. Thus (ii) implies (i).

In view of these conditions if $\{x,y\}\not\subset F$ for all $F\in \hf$ then we can symmetrize $\hf$ by removing all $F\in \hf$  with $y\in F$ and adding all $\{y\}\cup G$ with $G\in \hf(x)$. Thereby $\hf(x)=\hf(y)$ for the new family. By Claim \ref{claim-3.1} the new family preserves the property $\hf\not\rightarrow (4,13)$.  If $|\hf(x)|\geq |\hf(y)|$ then the new family has at least as many members as the old one. Thus we may assume that for all distinct $x,y\in [n]$ either $\exists F\in \hf$ with $\{x,y\}\subset F$ or $\hf(x)=\hf(y)$.

It is easy to see that $\hf(x)=\hf(y)$ is an equivalence relation. Thus we get a partition $[n]=Z_1\cup Z_2\cup \ldots\cup Z_r$   and an auxiliary family $\hh\subset 2^{[r]}$ such that each $Z_i$ is an equivalence class,  $F\in \hf$ iff $|F\cap Z_i|\leq 1$ for all $i$ and $\{i\colon F\cap Z_i\neq \emptyset\}\in \hh$. Let us choose $\hf$ such that $r$ is minimal over all families $\hf$ with $\hf\not\rightarrow (4,13)$ and $|\hf|$ maximal.

Note that $\hf(x)=\hf(y)$ forces that $x$ and $y$ are in the same $Z_i$. Hence if $1\leq i<i'\leq r$, $x\in Z_i$, $y\in Z_{i'}$ then $\{x,y\}\subset F$ for some $F\in \hf$. Consequently, $\binom{[r]}{2}\subset \hh$.

\begin{claim}\label{claim-2.4}
If $H,H'\in \hh^{(3)}$ then $|H\cap H'|\leq 1$.
\end{claim}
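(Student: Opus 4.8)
The plan is to argue by contradiction using the minimality of $r$: if some pair $H, H' \in \hh^{(3)}$ has $|H \cap H'| = 2$, I will exhibit either a forbidden $4$-set trace or a way to merge two equivalence classes, contradicting the minimality of $r$. So suppose $H = \{i,j,k\}$ and $H' = \{i,j,k'\}$ with $k \neq k'$. Pick representatives $a \in Z_i$, $b \in Z_j$, $c \in Z_k$, $c' \in Z_{k'}$, and consider the $4$-set $C = \{a,b,c,c'\}$.

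First I would lower-bound $|\hf_{\mid C}|$ directly from the structure of $\hh$. We know $\binom{[r]}{2} \subset \hh$, so every $2$-subset of $\{i,j,k,k'\}$ lies in $\hh$; moreover $\{i,j,k\}, \{i,j,k'\} \in \hh$, and since $\hf$ is a down-set $\hh$ is a down-set too. Translating back, $\hf_{\mid C}$ contains $\emptyset$, all four singletons, all six pairs, and the two triples $\{a,b,c\}$ and $\{a,b,c'\}$ — that is already $1 + 4 + 6 + 2 = 13$ sets. Hence $|\hf_{\mid C}| \geq 13$, contradicting $\hf \not\rightarrow (4,13)$.

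The main thing to be careful about is the counting of which subsets of $C$ actually appear as traces: I need $\{c, c'\} = \{a,b,c,c'\} \cap C$ to be realized, which it is because $\{k, k'\} \in \binom{[r]}{2} \subset \hh$, and similarly all the other pairs and singletons; the key point distinguishing $|H \cap H'| = 2$ from $|H \cap H'| \leq 1$ is precisely that with a shared pair $\{i,j\}$ we get \emph{two} triples on the same $4$-set rather than spreading the triples over different $4$-sets. No merging argument is actually needed here — the $13$-element trace already gives the contradiction outright — so the only obstacle is bookkeeping the $2^4$ subsets of $C$ correctly and confirming that a shared pair forces both triples into $2^C$.
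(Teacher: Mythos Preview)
Your proof is correct and follows essentially the same approach as the paper's: assume two triples in $\hh^{(3)}$ share a pair, pick representatives from the four involved classes, and use $\binom{[r]}{2}\subset\hh$ (together with the two triples) to count $1+4+6+2=13$ traces on the resulting $4$-set, contradicting $\hf\not\rightarrow(4,13)$. The initial mention of the minimality of $r$ and a possible merging argument is unnecessary, as you yourself note; the direct trace count is all that is needed, and that is exactly what the paper does (in a much terser form).
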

\begin{proof}
Suppose the contrary. WLOG $H=(1,2,3)$, $H'=(1,2,4)$. Since $(3,4)\in \hh$, $|\hf{\mid_C}|\geq 13$ for the corresponding $C=\{z_1,z_2,z_3,z_4\}$ (where $z_i\in Z_i$), a contradiction.
\end{proof}

Let $b_i=|Z_i|$, $i=1,2,\ldots,r$. If $r=3$, then the theorem follows from the fact that $(b_1+1)(b_2+1)(b_3+1)$ is maximized when $b_1=\lfloor\frac{n+2}{3}\rfloor$, $b_2=\lfloor\frac{n+1}{3}\rfloor$ and $b_3=\lfloor\frac{n}{3}\rfloor$. Thus in the rest of the proof we  assume $r\geq 4$.

\begin{claim}
For $x\in Z_i$, $y\in Z_j$ with $i\neq j$,
\begin{align}\label{ineq-key3}
|\hf(x,\bar{y})|< |\hf(y)|.
\end{align}
\end{claim}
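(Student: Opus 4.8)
The plan is to argue by contradiction through a symmetrization of the kind used in Claim~\ref{claim-3.1}. Suppose $|\hf(x,\bar y)|\ge|\hf(y)|$ and let $\hf'$ be obtained from $\hf$ by deleting every $F\in\hf$ with $y\in F$ and inserting, for each $G\in\hf(x,\bar y)$, the set $\{y\}\cup G$. First I would check that $\hf'$ is again a down-set: a subset of an inserted set $\{y\}\cup G$ either avoids $y$, hence is a subset of $G\in\hf$ and therefore lies in $\hf'$, or it equals $\{y\}\cup G'$ with $G'\subset G$, and then $\{x\}\cup G'\subset\{x\}\cup G\in\hf$ gives $G'\in\hf(x,\bar y)$. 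By Claim~\ref{claim-3.1}, $\hf'\not\rightarrow(4,13)$. Since the inserted sets are precisely the members of $\hf'$ containing $y$, we get $|\hf'|=|\hf|-|\hf(y)|+|\hf(x,\bar y)|\ge|\hf|$; maximality of $|\hf|$ then forces all of these to be equalities, so $|\hf'|=|\hf|$ and $|\hf(x,\bar y)|=|\hf(y)|$. It remains to derive a contradiction from this equality.

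Next I would pin down the equivalence structure of $\hf'$. Counting the members of $\hf'$ that contain $x$, respectively $y$, yields $\hf'(x)=\hf'(y)=\hf(x,\bar y)$; and since every $G\in\hf(x,\bar y)$ satisfies $G\cap Z_i=\emptyset$ (because $\{x\}\cup G\in\hf$ meets $Z_i$ only in $x$), the same count gives $\hf'(x')=\hf(x,\bar y)$ for every $x'\in Z_i$. Hence $Z_i\cup\{y\}$ lies inside a single equivalence class of $\hf'$. A routine check of the remaining pairs, using $\binom{[r]}{2}\subset\hh$, shows that $\hf'$ is again in canonical form, with equivalence classes $Z_i\cup\{y\}$, the sets $Z_k$ for $k\ne i,j$, and, when $|Z_j|\ge2$, also $Z_j\setminus\{y\}$; moreover its auxiliary family is again $\hh$.

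If $|Z_j|=1$ this already completes the proof: $\hf'$ is then a maximum down-set with $\hf'\not\rightarrow(4,13)$ possessing only $r-1$ equivalence classes, contradicting the minimality of $r$.

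The case $|Z_j|\ge2$ is where the essential difficulty lies, since there $\hf'$ still has $r$ classes and minimality of $r$ is not violated outright. Here I would exploit the fact that $|\hf|=\sum_{S\in\hh}\prod_{k\in S}b_k$, viewed along the exchange direction $b_i\mapsto b_i+t,\ b_j\mapsto b_j-t$, is a strictly concave quadratic in $t$ with leading coefficient $-\sum_{\{i,j\}\subset S\in\hh}\prod_{k\in S\setminus\{i,j\}}b_k<0$; the equality $|\hf(x,\bar y)|=|\hf(y)|$ says precisely that this quadratic attains its maximum at the two adjacent integer points $t=0$ and $t=1$. I would then combine this rigidity with the structural information at hand — $\binom{[r]}{2}\subset\hh$, linearity of $\hh^{(3)}$ (Claim~\ref{claim-2.4}), $r\ge4$, and the hypothesis $n\ge25$, together with comparisons of $|\hf|$ to $|\hf(n,3)|$ and to the values produced by other coordinate exchanges and by small modifications of $\hh$ — to cut down to a short explicit list of configurations $(\hh;b_1,\dots,b_r)$, each of which can be eliminated directly or, via a second symmetrization, made to yield a maximum family with fewer than $r$ equivalence classes. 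I expect the bookkeeping in this final case to carry most of the technical weight of the proof.
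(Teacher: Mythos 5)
Your proposal diverges from the paper's proof at the very first step, and the divergence creates a gap that you do not close. The paper applies the symmetrization of Claim~\ref{claim-3.1} \emph{simultaneously to every vertex $y\in Z_j$}: all members of $\hf$ meeting $Z_j$ are deleted, and for each $y\in Z_j$ the sets $\{y\}\cup G$, $G\in\hf(x,\bar y)$, are inserted. Since by assumption $|\hf(x,\bar y)|\ge|\hf(y)|$ for each such $y$ (all $y\in Z_j$ are interchangeable), this produces a family $\hf'$ with $|\hf'|\ge|\hf|$ and $\hf'\not\rightarrow(4,13)$ in which the whole class $Z_j$ has been absorbed into $Z_i$. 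Thus $\hf'$ has $r-1$ equivalence classes, and this contradicts the minimality of $r$ outright -- no case split on $|Z_j|$ is needed.

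You instead symmetrize a single $y$. That handles $|Z_j|=1$ correctly, but when $|Z_j|\ge 2$ the resulting family still has $r$ classes, so the minimality of $r$ gives nothing, and you then face exactly the situation you describe: a quadratic $f(t)$ with $f(0)=f(1)$, i.e.\ the move is \emph{neutral}. At that point your text stops being a proof. The final paragraph is a plan (``combine this rigidity with\ldots\ to cut down to a short explicit list of configurations\ldots\ I expect the bookkeeping\ldots\ to carry most of the technical weight''); nothing is actually shown. In particular, the equality $f(0)=f(1)$ alone does not contradict maximality of $|\hf|$ (the concave quadratic has its real maximum at $t=1/2$, which is not an integer move), so some additional structural input -- such as moving the \emph{entire} class $Z_j$, as the paper does -- is genuinely required. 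The missing idea is precisely that the symmetrization should be carried out for all of $Z_j$ at once, so that $r$ drops; redo the argument along those lines and the claim follows in a few lines.

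One small point worth retaining from your write-up: the verification that the symmetrized family is again a down-set and that its equivalence-class structure is what one expects is a useful sanity check that the paper leaves implicit.
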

\begin{proof}
If $|\hf(x,\bar{y})|\geq |\hf(y)|$ then for every $y\in Z_j$ we remove all edges $F\in \hf$ with $y\in F$ and add the edges $\{y\}\cup G$ for $G\in \hf(x,\bar{y})$. By Claim \ref{claim-3.1} the new family $\hf'$ satisfies $\hf'\not\rightarrow (4,13)$ and  $|\hf'|\geq |\hf|$. However, $\hf'$ has $r-1$ classes, contradicting the minimality of $r$.
\end{proof}

\begin{claim}\label{claim-3.4}
There exists $z\in [n]$ such that
\begin{align}\label{ineq-key6}
|\hf^{(3)}(z)| > \frac{n^2}{9}-\frac{n}{2}+1.
\end{align}
\end{claim}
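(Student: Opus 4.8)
The plan is to exploit the structure established so far: $\hf^{(3)}$ is cancellative, $\binom{[r]}{2}\subset\hh$, any two triples in $\hh^{(3)}$ meet in at most one point (Claim \ref{claim-2.4}), and $r\ge 4$. First I would write $|\hf^{(3)}|=\sum_{H\in\hh^{(3)}}b_{h_1}b_{h_2}b_{h_3}$ where $H=\{h_1,h_2,h_3\}$, and more to the point set up a lower bound for $|\hf^{(3)}|$ coming from the extremal assumption $|\hf|\ge|\hf(n,3)|$. Since the lower-dimensional part of any down-set is at most $\binom n2+n+1$, and $|\hf(n,3)|=\prod_{0\le i<3}\lfloor\frac{n+3+i}{3}\rfloor$, we get $|\hf^{(3)}|\ge \prod_{0\le i<3}\lfloor\frac{n+3+i}{3}\rfloor-\binom n2-n-1$, which for $n\ge 25$ is comfortably larger than, say, $\frac{n^3}{27}-\frac{n^2}{6}+\cdots$; I would pin down the exact polynomial lower bound needed.

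Next I would average. Each triple $H=\{h_1,h_2,h_3\}\in\hh^{(3)}$ contributes $b_{h_1}b_{h_2}b_{h_3}$ to $|\hf^{(3)}|$ and contributes to $|\hf^{(3)}(z)|$ for every $z$ lying in $Z_{h_1}\cup Z_{h_2}\cup Z_{h_3}$, i.e. for $b_{h_1}+b_{h_2}+b_{h_3}$ choices of $z$. Hence
\begin{align*}
\sum_{z\in[n]}|\hf^{(3)}(z)| = \sum_{H\in\hh^{(3)}}(b_{h_1}+b_{h_2}+b_{h_3})\,b_{h_1}b_{h_2}b_{h_3}.
\end{align*}
If $z\in Z_i$ then $|\hf^{(3)}(z)|$ counts pairs $\{j,k\}$ with $\{i,j,k\}\in\hh^{(3)}$, weighted by $b_jb_k$; by Claim \ref{claim-2.4} these pairs are edge-disjoint in $\binom{[r]\setminus\{i\}}{2}$, so by the Lemma (inequality \eqref{ineq-key0}) applied to the $b_j$'s (or directly to $\sum b_jb_k\le\frac12(\sum b_j)^2\le\frac12(n-b_i)^2\le\frac{n^2}{2}$) one gets a \emph{uniform upper bound} $|\hf^{(3)}(z)|\le \frac{n^2}{8}$ or so — but that is the wrong direction. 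The right move is the opposite averaging: bound $\sum_z|\hf^{(3)}(z)|\le 3\max_i b_i\cdot|\hf^{(3)}|$ is again too weak, so instead I would compare $\sum_z|\hf^{(3)}(z)|=3|\hf^{(3)}|$ is false; rather each 3-set $F\in\hf^{(3)}$ is counted exactly $3$ times in $\sum_z|\hf^{(3)}(z)|$, giving
\begin{align*}
\sum_{z\in[n]}|\hf^{(3)}(z)| = 3|\hf^{(3)}|,
\end{align*}
so by pigeonhole $\max_z|\hf^{(3)}(z)|\ge \frac{3|\hf^{(3)}|}{n}\ge \frac{3}{n}\Bigl(\prod_{0\le i<3}\bigl\lfloor\tfrac{n+3+i}{3}\bigr\rfloor-\binom n2-n-1\Bigr)$.

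Finally I would check this last quantity exceeds $\frac{n^2}{9}-\frac n2+1$ for all $n\ge 25$. Writing $n=3q+s$ with $s\in\{0,1,2\}$, the product $\prod_{0\le i<3}\lfloor\frac{n+3+i}{3}\rfloor$ equals $(q+1)(q+1+[s\ge1])(q+1+[s\ge2])\ge(q+1)^3\ge\frac{(n+1)^3}{27}$ (with the precise value depending on $s$), so $\frac3n\cdot\frac{(n+1)^3}{27}=\frac{(n+1)^3}{9n}$, and one verifies $\frac{(n+1)^3}{9n}-\frac{3}{n}(\binom n2+n+1)-\bigl(\frac{n^2}{9}-\frac n2+1\bigr)>0$ for $n\ge 25$; this reduces to a cubic inequality in $n$ that is routine to confirm (the leading terms are $\frac{n^2}{9}+\frac{n}{3}+\cdots$ versus $\frac{n^2}{9}-\frac n2+\cdots$, so the claimed margin is roughly $\frac{5n}{6}$). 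The main obstacle is bookkeeping the floor functions in the three residue cases $n\equiv 0,1,2\pmod 3$ so that the polynomial lower bound on $|\hf^{(3)}|$ is tight enough; once that is in hand, the pigeonhole step and the final cubic estimate are straightforward.
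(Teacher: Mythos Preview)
Your approach---lower-bound $|\hf^{(3)}|$ from the extremality hypothesis $|\hf|\ge|\hf(n,3)|$ by subtracting $\binom n2+n+1$, then apply pigeonhole via $\sum_{z}|\hf^{(3)}(z)|=3|\hf^{(3)}|$---is exactly the paper's argument. The detours through the structure of $\hh^{(3)}$ and Claim~\ref{claim-2.4} are unnecessary here; none of that is used.

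There is, however, a genuine arithmetic slip in your final check. Your displayed inequality
\[
\frac{(n+1)^3}{9n}-\frac{3}{n}\Bigl(\binom n2+n+1\Bigr)-\Bigl(\frac{n^2}{9}-\frac n2+1\Bigr)>0
\]
is \emph{false}: expanding gives $-\tfrac{2n}{3}-\tfrac{13}{6}-\tfrac{26}{9n}$, which is negative for all $n>0$. The ``margin roughly $\tfrac{5n}{6}$'' you quote forgets that $\tfrac{3}{n}\binom n2\approx\tfrac{3n}{2}$ must also be subtracted, and this wipes out the $\tfrac{5n}{6}$ and more. The uniform lower bound $(q+1)^3\ge(n+1)^3/27$ is therefore too weak. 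What saves the claim is that the product $\prod_i\lfloor\tfrac{n+3+i}{3}\rfloor$ is in fact at least $(n+3)^3/27$ when $3\mid n$, $(n+2)^2(n+5)/27$ when $n\equiv 1$, and $(n+1)(n+4)^2/27$ when $n\equiv 2$; in each case the linear term in the resulting expansion is $+n$ rather than $+\tfrac n3$, and this is exactly enough. So the residue-class bookkeeping you flagged as a nuisance is not optional---it is where the proof actually lives. The paper carries out precisely these three cases and finds the bound holds already for $n\ge 8$.
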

\begin{proof}
Let $z\in [n]$ be a vertex with  $|\hf^{(3)}(z)|$ maximal. Note that $\hf=\hf^{(3)}\cup \hf^{(2)}\cup \hf^{(1)}\cup\hf^{(0)}$. Since $|\hf^{(0)}|+|\hf^{(1)}|=n+1$
 and $|\hf^{(2)}|\leq \binom{n}{2}$,
 \[
 |\hf^{(3)}| =|\hf|-|\hf^{(0)}|-|\hf^{(1)}|-|\hf^{(2)}|\geq \left\lfloor\frac{n+3}{3}\right\rfloor\left\lfloor\frac{n+4}{3}\right\rfloor\left\lfloor\frac{n+5}{3}\right\rfloor -n-1-\binom{n}{2}.
 \]
 It follows that
\begin{align*}
|\hf^{(3)}(z)| \geq \frac{ 3|\hf^{(3)}| }{n}\geq \frac{3}{n} \left\lfloor\frac{n+3}{3}\right\rfloor\left\lfloor\frac{n+4}{3}\right\rfloor\left\lfloor\frac{n+5}{3}\right\rfloor
-\frac{3n}{2}-\frac{3}{2}-\frac{3}{n}.
\end{align*}
For $n=3t$,
\[
|\hf^{(3)}(z)| \geq\frac{3}{n}\frac{(n+3)^3}{27}
-\frac{3n}{2}-\frac{3}{2}-\frac{3}{n} =  \frac{n^2}{9} -\frac{n}{2}+\frac{3}{2}.
\]
For $n=3t+1$ and $n\geq 8$,
\begin{align*}
|\hf^{(3)}(z)| \geq\frac{3}{n}\frac{(n+2)^2(n+5)}{27}
-\frac{3n}{2}-\frac{3}{2}-\frac{3}{n} &=  \frac{n^2}{9} -\frac{n}{2}+\frac{7}{6}-\frac{7}{9n}>\frac{n^2}{9}-\frac{n}{2}+1.
\end{align*}
For $n=3t+2$ and $n\geq 8$,
\begin{align*}
|\hf^{(3)}(z)| \geq\frac{3}{n}\frac{(n+1)(n+4)^2}{27}
-\frac{3n}{2}-\frac{3}{2}-\frac{3}{n} &=  \frac{n^2}{9} -\frac{n}{2}+\frac{7}{6}-\frac{11}{9n}>\frac{n^2}{9}-\frac{n}{2}+1.
\end{align*}
\end{proof}

 Let $x\in Z_i$ and assume $z\in Z_j$. If $i=j$ then clearly $\hf^{(3)}(x)=\hf^{(3)}(z)$. If $i\neq j$ then $|\hf(z,x)|\leq n-b_i-b_j+1$. By \eqref{ineq-key3},
\[
|\hf(z)|-|\hf(x)|\leq |\hf(z,x)|+|\hf(z,\bar{x})|-|\hf(x)| \leq |\hf(x,z)|-1\leq n-b_i-b_j.
\]
Since $|\hf^{(2)}(x)|=n-b_i$ and $|\hf^{(2)}(z)|=n-b_j$,
\begin{align*}
|\hf^{(3)}(z)|-|\hf^{(3)}(x)|&=(|\hf(z)|-|\hf^{(2)}(z)|-1)-(|\hf(x)|-|\hf^{(2)}(x)|-1)\nonumber\\[5pt]
&\leq n-b_i-b_j+(b_j-b_i)\\[5pt]
&=n-2b_i.
\end{align*}
By \eqref{ineq-key6} and $n\geq 25$, it follows that for all $x\in [n]$
\begin{align}\label{ineq-key7}
|\hf^{(3)}(x)| \geq |\hf^{(3)}(z)|-(n-2b_i)> \frac{n^2}{9}-\frac{3n}{2}+1+2b_i\geq \frac{n^2}{9}-\frac{3n}{2}+3\geq \frac{n^2}{18}.
\end{align}

If $r=4$, then by  Claim \ref{claim-2.4} we may assume that $\hh^{(3)}=\{(1,2,3)\}$. Then $\hf^{(3)}(x)=\emptyset$ for  all $x\in Z_4$, contradicting \eqref{ineq-key7}.

Let us fix $x_i\in Z_i$, $i=1,2,\ldots,r$.

\begin{claim}\label{claim-3.5}
$r\neq 5$.
\end{claim}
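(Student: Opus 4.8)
The plan is to rule out $r=5$ by a counting argument combined with the structural constraints already in place, namely $\binom{[r]}{2}\subset\hh$ (so all pairs are present), the ``linear triple system'' property of Claim \ref{claim-2.4} (any two triples of $\hh^{(3)}$ share at most one vertex), and the lower bound \eqref{ineq-key7} that each vertex $x$ has $|\hf^{(3)}(x)|>n^2/18$. First I would observe that with $r=5$ the link of a class $Z_i$ inside $\hh^{(3)}$ has a very restricted shape: by Claim \ref{claim-2.4} the triples through a fixed index $i$ correspond to a matching on the other four indices, so each index $i$ lies in at most two triples of $\hh^{(3)}$, and $|\hh^{(3)}|\le \lfloor 5\cdot 2/3\rfloor = 3$. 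Up to relabeling there are only a few possibilities for $\hh^{(3)}$: the empty system, a single triple, two triples meeting in a point, three triples forming a ``near-pencil''/``triangle'' configuration on $[5]$ (e.g.\ $\{123,145,\ldots\}$). In every case some index $j\in[5]$ is covered by at most one triple.

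Next I would convert the covering information into a bound on $|\hf^{(3)}(x)|$ for $x\in Z_j$. If $j$ lies in no triple of $\hh^{(3)}$ then $\hf^{(3)}(x)=\emptyset$, directly contradicting \eqref{ineq-key7}. If $j$ lies in exactly one triple, say $\{j,k,l\}$, then every $3$-set $F\in\hf$ containing $x\in Z_j$ has its other two elements in $Z_k\cup Z_l$, with one in each (since $|F\cap Z_i|\le 1$); hence $|\hf^{(3)}(x)|\le b_k b_l$. So I need $b_k b_l > n^2/18$. But there are two indices $j,j'$ (the two classes hitting at most one triple — and in the relevant configurations one checks at least two such indices exist, or one can run the argument at a single such $j$ and push harder), and the corresponding products $b_kb_l$ must \emph{all} exceed $n^2/18$ while $\sum_i b_i = n$. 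Here the elementary inequality \eqref{ineq-key0} enters: summing the products $b_ib_j$ over the pairs $\{i,j\}$ forced to appear as ``co-neighbourhoods'' and comparing with $\frac{m-1}{2m}(\sum b_i)^2 = \frac{m-1}{2m}n^2$ yields a contradiction once the number of such forced large products is large enough relative to $m$. Concretely, with $r=5$ one gets several pairs whose product must exceed $n^2/18$, but $\prod$-type or $\sum$-type averaging caps how many pairs $\{i,j\}$ with $b_ib_j$ large can coexist, and $n^2/18$ is chosen exactly so this fails.

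A cleaner route, which I would actually write up, is: pick the index $j$ covered by at most one triple $\{j,k,l\}$; then $|\hf^{(3)}(x)|\le b_kb_l\le \bigl(\frac{b_k+b_l}{2}\bigr)^2\le \bigl(\frac{n-b_j-b_m-b_{m'}}{2}\bigr)^2$ where $\{m,m'\}=[5]\setminus\{j,k,l\}$; combined with $|\hf^{(3)}(x)|>n^2/18$ from \eqref{ineq-key7} this forces $b_k+b_l$ to be large, hence $b_j+b_m+b_{m'}$ small, say $b_k+b_l > n/\sqrt{4.5} \approx 0.47n$. Doing this for \emph{two} distinct such indices $j_1,j_2$ (in each admissible configuration of $\hh^{(3)}$ with $r=5$ at least two indices lie in $\le 1$ triple — the single-triple case has two such indices among the three in the triple only if\ldots actually the two indices \emph{outside} the unique triple give $\hf^{(3)}(x)=\emptyset$, the quickest contradiction) pins down the $b_i$ so tightly that $\sum b_i=n$ is violated, or the two large co-neighbourhood pairs overlap in an index whose size is then forced to be both large and small.

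I expect the main obstacle to be the bookkeeping of the finitely many configurations of $\hh^{(3)}$ on $[5]$ and checking that in each one there is an index whose link is small enough to contradict \eqref{ineq-key7}; the arithmetic (plugging $n\ge 25$, comparing $b_kb_l$ with $n^2/18$, and using $\sum b_i=n$ together with \eqref{ineq-key0}) is routine once the configuration analysis is organized. The key inputs are Claim \ref{claim-2.4} (linearity of $\hh^{(3)}$), the presence of all pairs $\binom{[5]}{2}\subset\hh$, the uniform lower bound \eqref{ineq-key7}, and the elementary inequality \eqref{ineq-key0}; no new machinery is needed, and the same template — bound $|\hf^{(3)}(x)|$ by a product of two class sizes, then contradict the $n^2/18$ lower bound via convexity and $\sum b_i=n$ — will presumably recur for $r=6$ and larger $r$.
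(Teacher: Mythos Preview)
Your plan has a genuine gap in the main case. First, a small correction: on $[5]$, two triples meeting in at most one point already cover all five indices, so a third triple would have to share two elements with one of them; hence $|\hh^{(3)}|\le 2$, not $\le 3$, and there is no ``three-triple'' configuration to analyse. The empty-triple and single-triple cases are indeed immediate (some index lies in no triple, giving $|\hf^{(3)}(x)|=0$). The only substantive case is $\hh^{(3)}=\{(1,2,3),(1,4,5)\}$, and here your counting with the $n^2/18$ bound does not close. From indices $2,3,4,5$ you get $b_1b_2,b_1b_3,b_1b_4,b_1b_5>n^2/18$, and from index $1$ you get $b_2b_3+b_4b_5>n^2/18$; summing gives $b_1(n-b_1)+b_2b_3+b_4b_5>5n^2/18$, but \eqref{ineq-key0} with $m=5$ only says $\sum_{i<j}b_ib_j\le 2n^2/5$, and $5/18<2/5$, so there is no contradiction. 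Your ``cleaner route'' fares no better: $b_kb_l>n^2/18$ yields $b_k+b_l>n\sqrt{2}/3\approx 0.471n$ for each of the four overlapping pairs $\{1,2\},\{1,3\},\{1,4\},\{1,5\}$, and adding gives only $3b_1+n>1.886n$, i.e.\ $b_1>0.295n$, which is consistent. Indeed the point $b_1=n/3$, $b_2=b_3=b_4=b_5=n/6$ meets all five of your constraints with equality, so the $n^2/18$ threshold is exactly borderline here and cannot yield a strict contradiction without a much finer (and not ``routine'') argument.

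The paper takes a completely different route that sidesteps \eqref{ineq-key7} entirely: it observes that up to symmetry $\hh^{(3)}\subset\{(1,2,3),(1,4,5)\}$, writes out $|\hf|$ explicitly in terms of the $b_i$, and then \emph{merges} $Z_2\cup Z_5$ and $Z_3\cup Z_4$ into two classes to obtain a $3$-partite family $\hf'$. A short computation (using $b_1\ge 1$ and arranging $b_2\ge b_3$, $b_4\ge b_5$) shows $|\hf'|-|\hf|=b_1(b_2b_4+b_3b_5)-b_2b_5-b_3b_4\ge (b_2-b_3)(b_4-b_5)\ge 0$, so $|\hf'|\ge|\hf|$ while $\hf'$ has only three classes, contradicting the minimality of $r$. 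This argument is uniform in $n$ and needs no link-degree lower bound at all; the template you describe (link bounds plus \eqref{ineq-key0}) is reserved for $r=6$ and $r\ge 8$, where it does succeed.
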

\begin{proof}
By Claim \ref{claim-2.4} and symmetry, we may assume that $\hh^{(3)}\subset\{(1,2,3),(1,4,5)\}$, $b_2\geq b_3$ and $b_4\geq b_5$.
Then
\[
|\hf|=b_1(b_2b_3+b_4b_5) +\sum_{1\leq i<j\leq 5}b_ib_j+n+1.
\]
Let $\hf'$ be the family obtained from $\hf$ by merging $Z_2$ and $Z_5$, $Z_3$ and $Z_4$. Then
\[
|\hf'|=b_1(b_2+b_5)(b_3+b_4)+b_1(b_2+b_5+b_3+b_4)+(b_2+b_5)(b_3+b_4)+n+1.
\]
Using  $b_1\geq 1$, we obtain that
\[
|\hf'|-|\hf| =b_1(b_2b_4+b_3b_5)-b_2b_5-b_3b_4\geq (b_2-b_3)(b_4-b_5)\geq 0.
\]
Clearly $\hf'\not\rightarrow (4,13)$ and $\hf'$ is 3-partite. This contradicts the minimality of $r$.
\end{proof}

\begin{claim}\label{claim-3.6}
$r\neq 6$.
\end{claim}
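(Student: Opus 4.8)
The plan is to rule out $r=6$ by the same merging strategy used for $r=5$, exploiting the strong structural restriction imposed by Claim \ref{claim-2.4}: the triples of $\hh$ form a linear $3$-graph (a partial Steiner system) on the ground set $[6]$, and in addition we know $|\hf^{(3)}(x)|\geq n^2/18$ for every $x\in[n]$, forcing every vertex $i\in[6]$ to lie in at least one triple of $\hh^{(3)}$ (otherwise $\hf^{(3)}(x)=\emptyset$ for $x\in Z_i$, contradicting \eqref{ineq-key7}). First I would enumerate, up to isomorphism, the linear triple systems on $6$ points in which every point is covered: the possibilities for $\hh^{(3)}$ are (a) a perfect matching of triples $\{(1,2,3),(4,5,6)\}$; (b) three triples pairwise meeting in one point, e.g. $\{(1,2,3),(3,4,5),(5,6,1)\}$ (a ``triangle'' of triples); (c) four triples forming a near-pencil or other linear configuration; and at the extreme (d) a $(6,3,1)$-type packing with more triples. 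Since the system is linear, the number of triples is at most $\lfloor 6\cdot 5/(3\cdot 2)\rfloor=5$, so only finitely many cases arise.

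For each case I would write $|\hf|=\sum_{H\in\hh^{(3)}}\prod_{i\in H}b_i+\sum_{1\leq i<j\leq 6}b_ib_j+n+1$ and then choose a merging of the six classes into three classes that (i) keeps the family $3$-partite, (ii) preserves $\hf'\not\rightarrow(4,13)$ — which is automatic since any $3$-partite family $\hf(n,3)$-type construction has all $4$-traces of size $\leq 12$ — and (iii) does not decrease $|\hf|$, contradicting minimality of $r$. In case (a) I would merge $Z_1$ with $Z_4$, $Z_2$ with $Z_5$, $Z_3$ with $Z_6$ (matching up the two disjoint triples), so that after merging the two triple-products $b_1b_2b_3$ and $b_4b_5b_6$ become ``subsumed'' and the gain $|\hf'|-|\hf|$ reduces, after cancellation, to a sum of products of $b_i$'s that is manifestly nonnegative (analogous to the $(b_2-b_3)(b_4-b_5)\geq 0$ bound in Claim \ref{claim-3.5}, now using $b_i\geq 1$ and pairing terms appropriately). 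In the triangle case (b), I would exhibit the merging $Z_1\cup Z_4$, $Z_3\cup Z_6$, $Z_2\cup Z_5$ (each new class contains exactly one vertex of each original triple is impossible here, so instead pick the merging so each new class misses at least one triple entirely) and again reduce the difference to a nonnegative expression; when a direct merging fails to be $3$-partite one instead first deletes a carefully chosen triple from $\hh^{(3)}$ (which only decreases $|\hf|$ by $\prod_{i\in H}b_i$) and compensates by a subsequent merge, or else one observes that the deleted configuration already lets some vertex fail \eqref{ineq-key7}.

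The main obstacle I anticipate is the case analysis for the denser linear systems on $[6]$ (cases (c)–(d)), where no single merging into three parts simultaneously destroys all triples, so one cannot argue purely by ``each merged class avoids some triple.'' There the strategy will be to combine a merge with the quantitative bound \eqref{ineq-key7}: if $\hh^{(3)}$ has, say, $4$ or $5$ triples on $6$ points, then some vertex $i$ lies in at most one triple, the contributions to $|\hf^{(3)}(x)|$ for $x\in Z_i$ are bounded by $b_j b_k$ for a single triple $(i,j,k)$, i.e.\ at most roughly $(n/6)^2 < n^2/18$ when $n\geq 25$, directly contradicting \eqref{ineq-key7}; this kills the dense cases outright. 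So effectively only the sparse cases (a) and (b) require the merging argument, and those proceed exactly as in Claim \ref{claim-3.5}. Finally I would double-check that in every merge the resulting partition still respects $\lfloor n/3\rfloor\leq |X_i|\leq\lceil n/3\rceil$ is \emph{not} needed — minimality of $r$ only requires $3$-partiteness and $|\hf'|\geq|\hf|$ with $\hf'\not\rightarrow(4,13)$ — so no size balancing is necessary.
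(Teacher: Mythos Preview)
Your proposal has genuine gaps. First, the merging argument in case (a) does not work as claimed: if $\hh^{(3)}=\{(1,2,3),(4,5,6)\}$ and you merge $Z_1\cup Z_4$, $Z_2\cup Z_5$, $Z_3\cup Z_6$ into the full $3$-partite family $\hf'$, the difference $|\hf'|-|\hf|$ equals
\[
\sum_{\substack{(i,j,k)\neq(1,2,3),(4,5,6)\\ i\in\{1,4\},\,j\in\{2,5\},\,k\in\{3,6\}}} b_ib_jb_k \;-\;(b_1b_4+b_2b_5+b_3b_6),
\]
which is \emph{not} manifestly nonnegative; e.g.\ $b_1=b_4$ large and the other $b_i=1$ makes it very negative. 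The analogy with the $(b_2-b_3)(b_4-b_5)\geq 0$ trick from Claim~\ref{claim-3.5} breaks down here because you lose three quadratic terms rather than gaining them. Second, your treatment of the dense case is backwards: the unique linear pairwise-intersecting $3$-graph with four triples on six vertices (namely $\{(1,3,5),(1,4,6),(2,3,6),(2,4,5)\}$) is $2$-regular, so \emph{no} vertex lies in at most one triple, and your $(n/6)^2$ estimate has no basis since the $b_i$ need not be balanced.

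The paper's proof of this claim abandons merging entirely and argues purely from the lower bound \eqref{ineq-key7}. In the disjoint-pair case it assumes $b_4+b_5+b_6\leq n/2$ and combines $b_4b_5+b_4b_6+b_5b_6>n^2/6$ (from three applications of \eqref{ineq-key7}) with the elementary bound $b_4b_5+b_4b_6+b_5b_6\leq(b_4+b_5+b_6)^2/3\leq n^2/12$. In the pairwise-intersecting case it uses that $\hh^{(3)}$ is contained in the $2$-regular four-triple system, writes the six instances of \eqref{ineq-key7} as bilinear inequalities in the $b_i$, adds them in two groups to obtain $(b_1+b_2)(b_3+b_4+b_5+b_6)>2n^2/9$ and $(b_3+b_4)(b_5+b_6)>n^2/9$, and derives a contradiction by splitting on whether $b_1+b_2\geq n/3$. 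You should replace the merging strategy with this direct quantitative argument.
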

\begin{proof}
If there are two disjoint edges in $\hh^{(3)}$, then by Claim \ref{claim-2.4} $|\hh^{(3)}|=2$. Without loss of generality, assume that $\hh^{(3)}=\{(1,2,3),(4,5,6)\}$ and $b_4+b_5+b_6\leq \frac{n}{2}$. Then by \eqref{ineq-key7}
\begin{align}\label{ineq-key8}
|\hf^{(3)}(x_4)|+|\hf^{(3)}(x_5)|+|\hf^{(3)}(x_6)|=b_5b_6+b_4b_6+ b_4b_5 >\frac{n^2}{6}.
\end{align}
By \eqref{ineq-key0}, we infer that
\[
b_4b_5+b_4b_6+b_5b_6 \leq  \frac{(b_4+b_5+b_6)^2}{3} \leq \frac{n^2}{12},
\]
contradicting \eqref{ineq-key8}.

Thus $|H\cap  H'|=1$ for all $H,H'\in \hh^{(3)}$.  Up to isomorphism there is only one triple-system with four triples on six vertices. By symmetry we may assume that
\[
\hh^{(3)}\subset\{(1,3,5),(1,4,6),(2,3,6),(2,4,5)\}.
\]
Then
\begin{align*}
&b_1b_5+b_2b_6\geq |\hf^{(3)}(x_3)|> \frac{n^2}{18}, &b_1b_6+b_2b_5\geq |\hf^{(3)}(x_4)| > \frac{n^2}{18},\\[5pt]
&b_1b_3+b_2b_4\geq |\hf^{(3)}(x_5)|> \frac{n^2}{18}, &b_1b_4+b_2b_3 \geq |\hf^{(3)}(x_6)|> \frac{n^2}{18}.
\end{align*}
Adding these inequalities, we get
\begin{align}\label{ineq-3.1}
(b_1+b_2)(b_3+b_4+b_5+b_6) >\frac{2n^2}{9}.
\end{align}
Moreover,
\begin{align*}
&b_3b_5+b_4b_6\geq |\hf^{(3)}(x_1)|> \frac{n^2}{18}, &b_3b_6+b_4b_5\geq |\hf^{(3)}(x_2)| > \frac{n^2}{18}.
\end{align*}
It implies that
\begin{align}\label{ineq-3.2}
(b_3+b_4)(b_5+b_6) >\frac{n^2}{9}.
\end{align}
Note that $b_1+b_2+b_3+b_4+b_5+b_6=n$. If $b_1+b_2\geq \frac{n}{3}$, then $b_3+b_4+b_5+b_6\leq \frac{2n}{3}$. It follows that $(b_3+b_4)(b_5+b_6) \leq \frac{n^2}{9}$, contradicting \eqref{ineq-3.2}. If $b_1+b_2< \frac{n}{3}$, then
\[
(b_1+b_2)(b_3+b_4+b_5+b_6)<\frac{2n}{9},
\]
contradicting \eqref{ineq-3.1}.
\end{proof}

For $H\in 2^{[r]}$, let $b_H=\prod_{i\in H} b_i$.

\begin{claim}
$r=7$.
\end{claim}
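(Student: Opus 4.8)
The plan is to prove $r\le 7$ by one global double‑counting estimate and then combine with the cases already disposed of. Note first that the standing assumption is $r\ge 4$, while the text preceding \eqref{ineq-key7} rules out $r=4$, Claim~\ref{claim-3.5} rules out $r=5$, and Claim~\ref{claim-3.6} rules out $r=6$; hence $r\ge 7$, and it suffices to establish $r\le 7$.

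The starting point is that $|\hf^{(3)}(x)|$ depends only on the class of $x$: for $x\in Z_i$ a $3$‑set $\{x\}\cup\{u,v\}$ lies in $\hf^{(3)}$ exactly when $u\in Z_j$, $v\in Z_k$ with $\{i,j,k\}\in\hh^{(3)}$, so $|\hf^{(3)}(x)|=\sum b_jb_k$, the sum over all pairs $\{j,k\}$ with $\{i,j,k\}\in\hh^{(3)}$. Fixing representatives $x_1,\ldots,x_r$ (one per class) and summing over the $r$ classes, I would rewrite the total as a sum over pairs:
\begin{align*}
\sum_{i=1}^{r}|\hf^{(3)}(x_i)|=\sum_{\{i,j,k\}\in\hh^{(3)}}\bigl(b_ib_j+b_jb_k+b_ib_k\bigr)=\sum_{1\le u<v\le r}\mu(u,v)\,b_ub_v ,
\end{align*}
where $\mu(u,v)$ counts the edges of $\hh^{(3)}$ through $\{u,v\}$. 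By Claim~\ref{claim-2.4} the $3$‑graph $\hh^{(3)}$ is linear, so $\mu(u,v)\le 1$ for every pair; since $b_ub_v\ge 0$ this yields $\sum_{i=1}^{r}|\hf^{(3)}(x_i)|\le\sum_{1\le u<v\le r}b_ub_v$. Applying \eqref{ineq-key0} with $m=r$ and $a_i=b_i$ (so $\sum_i b_i=n$) bounds the right‑hand side by $\tfrac{r-1}{2r}n^2$.

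On the other hand \eqref{ineq-key7} gives $|\hf^{(3)}(x_i)|>\tfrac{n^2}{18}$ for every $i$, hence $\sum_{i=1}^{r}|\hf^{(3)}(x_i)|>\tfrac{r n^2}{18}$. Combining the two bounds gives $\tfrac{r n^2}{18}<\tfrac{r-1}{2r}n^2$, i.e.\ $r^2-9r+9<0$, which forces $r\le 7$ (already at $r=8$ one has $64-72+9=1>0$). Together with $r\ge 7$ from the previous cases this yields $r=7$.

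I expect no real obstacle here. The two points needing care are: (a) that $|\hf^{(3)}(x)|$ is indeed constant on each class $Z_i$ — immediate from $\hf(x)=\hf(x')$ for $x,x'\in Z_i$ — so that summing one representative per class produces exactly $\sum_{\{i,j,k\}\in\hh^{(3)}}(b_ib_j+b_jb_k+b_ib_k)$; and (b) that Claim~\ref{claim-2.4} is precisely the linearity statement needed to discard the multiplicities $\mu(u,v)$. No sharper form of \eqref{ineq-key7} (such as the extra $+2b_i$) is required.
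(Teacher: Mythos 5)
Your proof is correct and follows essentially the same route as the paper: both bound $\sum_i|\hf^{(3)}(x_i)|$ above by $\sum_{i<j}b_ib_j\le\frac{r-1}{2r}n^2$ using Claim~\ref{claim-2.4} (your multiplicity count $\mu(u,v)\le 1$ is just the paper's disjointness of the $\hf^{(3)}(x_i)$ rephrased) together with \eqref{ineq-key0}, and below by $\frac{rn^2}{18}$ via \eqref{ineq-key7}, yielding $r^2-9r+9<0$ and hence $r\le 7$, with $r\ge 7$ coming from the earlier elimination of $r=4,5,6$.
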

\begin{proof}
Suppose that $r\neq 7$. Then by Claims \ref{claim-3.5} and \ref{claim-3.6}, $r\geq 8$. For each $i=1,2,\ldots,r$,
\[
|\hf^{(3)}(x_i)| =\sum_{P\in \hh^{(3)}(i)} b_P>\frac{n^2}{18}.
\]
By Claim \ref{claim-2.4}, $\hf^{(3)}(x_i)\cap \hf^{(3)}(x_j)=\emptyset$ for all $1\leq i<j\leq r$. By \eqref{ineq-key0}, we obtain that
\begin{align}\label{ineq-key10}
\frac{rn^2}{18}< \sum_{1\leq i\leq r}|\hf^{(3)}(x_i)| \leq \sum_{1\leq i<j\leq r} b_ib_j \leq \frac{r-1}{2r}\left(b_1+b_2+ \ldots+ b_r\right)^2=\frac{r-1}{2r}n^2.
\end{align}
It follows that
\[
\frac{r}{9}< 1- \frac{1}{r},
\]
which leads to a contradiction for $r\geq 8$.
\end{proof}

Now we assume that $r=7$.

\begin{claim}\label{claim-2.5}
For $n\geq 17$, $\max\limits_{1\leq i\leq 7} b_i\leq n/2$.
\end{claim}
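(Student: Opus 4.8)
The plan is to argue by contradiction: suppose $b_1=\max_i b_i>n/2$, so that $b_1>b_2+\cdots+b_7$. The idea is to exploit inequality \eqref{ineq-key7}, namely $|\hf^{(3)}(x_i)|>n^2/18$ for \emph{every} $i$, applied to a vertex $x_i$ with $i\neq 1$. Indeed, every triple $P\in\hh^{(3)}$ contributes $b_P=\prod_{k\in P}b_k$ to $|\hf^{(3)}(x_i)|$ precisely when $i\in P$; and by Claim \ref{claim-2.4} the triples in $\hh^{(3)}$ pairwise intersect in at most one point, so the triples through a fixed $i\neq 1$ that also contain $1$ use pairwise disjoint ``third coordinates'' among $\{2,\dots,7\}\setminus\{i\}$. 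Thus

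\begin{align*}
|\hf^{(3)}(x_i)|=\sum_{P\in\hh^{(3)},\,i\in P}b_P
&= b_1\!\!\sum_{\substack{P\in\hh^{(3)}\\ \{1,i\}\subset P}}\!\! b_{P\setminus\{1,i\}}
+\!\!\sum_{\substack{P\in\hh^{(3)}\\ i\in P,\,1\notin P}}\!\! b_P\\[4pt]
&\le b_1\!\!\sum_{k\in\{2,\dots,7\}\setminus\{i\}}\!\! b_k
+\Bigl(\sum_{k\in\{2,\dots,7\}\setminus\{i\}} b_k\Bigr)^2,
\end{align*}

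where the first sum is bounded because the sets $\{k\}$ over triples $\{1,i,k\}\in\hh^{(3)}$ are disjoint, and the second because it is a sub-sum of $\sum_{2\le k<l\le 7}b_kb_l\le(b_2+\cdots+b_7)^2$.

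Next I would sum this over $i=2,\dots,7$. Writing $S:=b_2+\cdots+b_7=n-b_1<n/2$, the first term sums to at most $b_1\sum_{2\le k<l\le 7}b_kb_l\cdot 2\le b_1 S^2$ — more carefully, $\sum_{i=2}^{7}\sum_{k\neq i}b_k=\sum_{i=2}^{7}(S-b_i)=5S$, but with the disjointness one gets the sharper bound $\sum_i b_1\sum_{k:\{1,i,k\}\in\hh^{(3)}}b_k = b_1\cdot 2\sum_{\{1,i,k\}\in\hh^{(3)}}b_ib_k\le 2b_1\cdot\tfrac{S^2}{4}$ by \eqref{ineq-key0} applied to the (at most) pairwise-intersecting structure; and the second term sums to at most $6S^2$. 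So $6\cdot\tfrac{n^2}{18}=\tfrac{n^2}{3}<\sum_{i=2}^{7}|\hf^{(3)}(x_i)|< b_1 S^2/2 + 6S^2 < (n/2+6)S^2$. Combined with $S<n/2$ this yields $\tfrac{n^2}{3}<(n/2+6)(n/2)^2/1$, which is \emph{not} immediately contradictory, so the bound needs tightening. The cleaner route: by Claim \ref{claim-2.4} the triples through $1$ form a partial matching on $\{2,\dots,7\}$, so there are at most $3$ of them; hence $\sum_{i=2}^7 b_1\sum_{k:\{1,i,k\}}b_k=2b_1\sum_{\{1,j,k\}\in\hh^{(3)}}b_jb_k$, and over a matching $b_jb_k\le\tfrac14(b_j+b_k)^2$ so the whole sum is at most $\tfrac{b_1}{2}\bigl(\sum b_j\bigr)^2\cdot$(something)$\le\tfrac{b_1 S^2}{2}$; meanwhile, critically, $\hf^{(3)}(x_1)=\sum_{P\in\hh^{(3)},1\in P}b_P> n^2/18$ too, giving $b_1\cdot\bigl(\sum_{\{1,j,k\}}b_jb_k\bigr)>n^2/18$, i.e. the matching-sum is large. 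I would play these off: the single term $|\hf^{(3)}(x_1)|$ already forces $b_1\cdot\tfrac{S^2}{4}\ge b_1\sum_{\{1,j,k\}}b_jb_k>\tfrac{n^2}{18}$, so $S^2>\tfrac{2n^2}{9b_1}$, hence $S>\tfrac{n}{3}\sqrt{2/b_1}\cdot\sqrt{n}$ — but that is vacuous for large $b_1$; so instead the contradiction must come from a vertex \emph{outside} every triple through $1$.

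Here is the step I expect to be the real obstacle and how I would handle it. Since $b_1>n/2\ge n/2$, and there are only $\le 3$ triples through vertex $1$ (by Claim \ref{claim-2.4}, they form a matching on the $6$ remaining classes), those triples cover all of $\{2,\dots,7\}$ only if there are exactly $3$ of them forming a perfect matching $\{1,2,3\},\{1,4,5\},\{1,6,7\}$ up to relabeling. In \emph{every} case there is a vertex — say we must find an index $i$ such that \emph{no} triple of $\hh^{(3)}$ uses both $1$ and $i$ is impossible when the matching is perfect, so instead I use that a triple $P\in\hh^{(3)}$ with $1\notin P$ lies entirely in $\{2,\dots,7\}$ and thus contributes $\le S^3$ total; more precisely $\sum_{i=2}^7\sum_{P\ni i,1\notin P}b_P=3\sum_{P\in\hh^{(3)},1\notin P}b_P\le 3\cdot\max$, and by \eqref{ineq-key0} applied to $b_2,\dots,b_7$ one gets $\sum_{P\subset\{2,\dots,7\}}b_P\le\binom{S/2}{...}$; bounding crudely $\sum_{P\subset\{2,\dots,7\},|P|=3}b_P\le\tfrac{5}{36}S^3$ or simply $\le S\cdot\tfrac{S^2}{4}$. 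Then the clean contradiction: apply \eqref{ineq-key7} to a vertex $i\in\{2,\dots,7\}$ \emph{not} in the matching-partner slot relevant, but since that may fail, apply it to \emph{all six} and also note that the total triple-weight $\sum_{P\in\hh^{(3)}}b_P\le b_1\cdot\tfrac{S^2}{4}+\tfrac{S^3}{4}$ on one hand, while $\sum_{i=2}^{7}|\hf^{(3)}(x_i)|$ counts each $P\not\ni 1$ three times and each $P\ni 1$ twice, so is $\ge 2\sum_P b_P\ge$ big. Putting $S<n/2$ and $b_1=n-S$: we need $6\cdot\tfrac{n^2}{18}\le 2\bigl(b_1\tfrac{S^2}{4}+\tfrac{S^3}{4}\bigr)$ to \emph{fail}, i.e. $\tfrac{n^2}{3}>\tfrac{S^2}{2}(b_1+S)=\tfrac{S^2 n}{2}$, i.e. $S^2<\tfrac{2n}{3}$, which is absurd. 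So the contradiction actually needs $S$ \emph{small}, meaning the genuine difficulty is the regime $S$ close to $n/2$; there I would instead use the two-sided bound more carefully, combining $(b_3+b_4)(b_5+b_6)$-type estimates as in Claim \ref{claim-3.6} together with $b_1>S$, and invoke $n\ge 17$ to kill the remaining slack. I would write this final case analysis out in full, checking the boundary $n=17$ numerically.
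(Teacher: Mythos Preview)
Your proposal is not a proof but a sequence of attempts, each of which you yourself diagnose as failing (``not immediately contradictory'', ``vacuous for large $b_1$'', ``the genuine difficulty is the regime $S$ close to $n/2$''), and it ends with a promissory note to ``write this final case analysis out in full''. The summing approach cannot be rescued: with $S=b_2+\cdots+b_7<n/2$ one gets, after the correct bookkeeping, $\sum_{i=2}^{7}|\hf^{(3)}(x_i)|\le b_1S+\tfrac{5}{12}S^2<nS-\tfrac{7}{12}S^2<\tfrac{17}{48}n^2$, while the lower bound from \eqref{ineq-key7} is only $\tfrac{16}{48}n^2$; and applying \eqref{ineq-key7} to $x_1$ alone gives only $|\hf^{(3)}(x_1)|\le S^2/4<n^2/16$, which again does not contradict $n^2/18$. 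So no amount of tightening of these global inequalities closes the gap.

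The missing idea is that the hard case is precisely when $(1,2,3)\in\hh$ (with $b_1\ge b_2\ge\cdots\ge b_7$): then $b_2b_3$ can be close to $n^2/16$, and $|\hf^{(3)}(x_1)|$ is genuinely large. The paper handles this by invoking \eqref{ineq-key3}, the symmetrization inequality $|\hf(x_3,\bar{x_1})|<|\hf(x_1)|$, which you never use. Expanding both sides yields $b_2(b_1-b_3)+b_1-b_2-b_3-1\le\bigl(\tfrac{n-b_1-b_2-b_3}{2}\bigr)^2$, and from $b_1>n/2$ this forces $b_2,b_3$ so small that $|\hf^{(3)}(x_1)|\le n^2/32$, contradicting the sharper threshold $|\hf^{(3)}(x_1)|>n^2/9-3n/2+3>n^2/32$ (valid for $n\ge 17$). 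In the complementary case $(1,2,3)\notin\hh$, the matching of pairs through $1$ cannot use $\{2,3\}$, and a direct AM--GM bound gives $|\hf^{(3)}(x_1)|\le\tfrac12\bigl(\tfrac{n-b_1}{2}\bigr)^2\le n^2/32$. Your outline contains neither the case split on $(1,2,3)\in\hh$ nor any use of \eqref{ineq-key3}, and without them the claim does not follow.
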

\begin{proof}
Assume that $b_1\geq b_2\geq \ldots \geq b_7$. By \eqref{ineq-key7}, for all $x\in [n]$
\[
|\hf^{(3)}(x)| >  \frac{n^2}{9}-\frac{3n}{2}+3.
\]
It is easy to check that for $n\geq 17$ the RHS is greater than $\frac{n^2}{32}$. Assume that $b_1\geq \frac{n}{2}$ and we distinguish two cases.

{\bf \noindent Case 1. } $(1,2,3)\in \hh$.

Then
\[
|\hf^{(3)}(x_3)|\geq b_1b_2,\ |\hf^{(2)}(x_3)|=n-b_3,\ |\hf(x_1,x_3)|= b_2+1,
\]
and
\[
|\hf^{(3)}(x_1)| \leq b_2b_3+\left(\frac{n-b_1-b_2-b_3}{2}\right)^2,\ |\hf^{(2)}(x_1)|=n-b_1.
\]
By \eqref{ineq-key3}, $|\hf(x_3,\overline{x_1})|< |\hf(x_1)|$. It follows that
\[
b_1b_2+n-b_3-(b_2+1)\leq b_2b_3+n-b_1+\left(\frac{n-b_1-b_2-b_3}{2}\right)^2.
\]
Equivalently,
\begin{align}\label{ineq-newkey1}
b_2(b_1-b_3)+b_1-b_2-b_3-1\leq \left(\frac{n-b_1-b_2-b_3}{2}\right)^2.
\end{align}
Note that $b_1\geq \frac{n}{2}$ implies $b_1\geq b_2+b_3$. If $b_1=b_2+b_3$ then $b_1+b_2+b_3=n$ and \eqref{ineq-newkey1} cannot hold. Thus $b_1>b_2+b_3$. Then \eqref{ineq-newkey1} implies
\begin{align}\label{ineq-newkey2}
b_2(b_1-b_3)\leq \frac{n-b_1-b_2-b_3}{4}  (n-b_1-b_2-b_3).
\end{align}
If $b_2\geq \frac{n-b_1-b_2-b_3}{4}$, then \eqref{ineq-newkey2} implies
\[
b_1-b_3\leq n-b_1-b_2-b_3.
\]
It follows that $2b_1+b_2\leq n$, contradicting $b_1\geq \frac{n}{2}$. Thus $b_2< \frac{n-b_1-b_2-b_3}{4}$. That means $5b_2+b_3<n-b_1$. Then $6\frac{b_2+b_3}{2} \leq n-b_1\leq \frac{n}{2}$. It implies $\frac{b_2+b_3}{2}\leq \frac{n}{12}$. Therefore
\[
|\hf^{(3)}(x_1)|\leq \left(\frac{n}{12}\right)^2+\left(\frac{n}{12}\right)^2+\left(\frac{n}{12}\right)^2<\frac{n^2}{32},
\]
a contradiction.

{\bf \noindent Case 2. } $(1,2,3)\notin \hh$.

Then
\[
|\hf^{(3)}(x_1)|\leq b_2b_4+b_3b_5+b_6b_7.
\]
The maximum should be for $b_6=b_7=0$, $b_3=b_4$. Set $b_2=\alpha n$, $b_3=b_4=\beta n$, $b_5=\gamma n$, then
\begin{align}\label{ineq-newkey8}
\frac{|\hf^{(3)}(x_1)|}{n^2}\leq \alpha \beta +\beta \gamma =\frac{1}{2}[2\beta(\alpha+\gamma)] \leq \frac{1}{2}\left(\frac{2\beta+\alpha+\gamma}{2}\right)^2=\frac{1}{2}\left(\frac{n-b_1}{2n}\right)^2.
\end{align}
The RHS is at most $\frac{1}{2}\left(\frac{1}{4}\right)^2=\frac{1}{32}$ for $b_1\geq \frac{n}{2}$. Thus we get $|\hf^{(3)}(x_1)|\leq \frac{n^2}{32}$, a contradiction.
\end{proof}

By \eqref{ineq-key10}, we have
\begin{align}\label{ineq-key11}
\sum_{1\leq i\leq 7} |\hf^{(3)}(x_i)| \leq \frac{3n^2}{7}.
\end{align}
By \eqref{ineq-key6},
\[
|\hf^{(3)}(z)| > \frac{n^2}{9}-\frac{n}{2}+1.
\]
Assume $z\in Z_j$. Then Claim \ref{claim-2.5} implies $n-b_j\geq \frac{n}{2}$. By \eqref{ineq-key7}, we obtain that
\begin{align*}
\sum_{1\leq i\leq 7} |\hf^{(3)}(x_i)| &= |\hf^{(3)}(z)|+\sum_{i\neq j} \left(\frac{n^2}{9}-\frac{3n}{2}+1+2b_i\right)\\[5pt]
&> \frac{n^2}{9}-\frac{n}{2}+1+ 6\left(\frac{n^2}{9}-\frac{3n}{2}+1\right)+2(n-b_j)\\[5pt]
&\geq \frac{7n^2}{9}-\frac{19n}{2}+7+n\\[5pt]
&\geq \frac{7n^2}{9}-\frac{17n}{2}+7.
\end{align*}
It is easy to check that the RHS is greater than $\frac{3n^2}{7}$ for $n\geq 24$, contradicting \eqref{ineq-key11}.
\end{proof}

\section{Other results for $(n,m)\rightarrow (4,b)$}

Let us introduce the general notation
\[
m(n,a,b)=\min\left\{m\colon (n,m)\rightarrow (a,b)\right\}.
\]
In this section we consider $m(n,4,b)$ for $b\leq 16$. With this notation the  Sauer-Shelah-Vapnik-Chervonenkis Theorem is equivalent to $m(n,4,16)=1+\sum\limits_{0\leq i\leq 3} \binom{n}{i}$.

For $5<b<16$, it is easy to see that all extremal families satisfying $\hf\not\rightarrow (4,b)$ span $[n]$, i.e., $\cup_{F\in \hf} F=[n]$. Hence $\binom{[n]}{\leq 1}\subset \hf$. This motivates us to introduce the following auxiliary definitions. Set $\tilde{\hf}=\hf^{(2)}\cup \hf^{(3)}$ and say that $\tilde{\hf}$ is {\it complete} if $\partial \hf^{(3)}\subset \hf^{(2)}$, that is, if $P\subset T\in \hf^{(3)}$ and $|P|=2$ then $P\in \hf^{(2)}$.

Let us introduce the notation $\tilde{\hf}\hookrightarrow (4,c)$ if there exists a 4-set $C$ with $|\hf^{(2)}\cap \binom{C}{2}|+|\hf^{(3)}\cap \binom{C}{3}|\geq c$. If $\hf$ is a down-set with $|F|\leq 3$ for all $F\in \hf$ and $\cup \hf=[n]$ then $\tilde{\hf}\hookrightarrow (4,c)$ is equivalent to $\hf\rightarrow (4,c+5)$.

Finally, for $1\leq c<11$ we introduce the notation
\[
\tilde{m}(n,4,c)=\min\left\{\tilde{m}\colon |\tilde{\hf}|\geq \tilde{m} \mbox{ implies } \tilde{\hf} \hookrightarrow (4,c) \mbox{ for a complete family }\tilde{\hf}\subset 2^{[n]} \right\}.
\]
Clearly, $\tilde{m}(n,4,c)=m(n,4,c+5)-n-1$.

Let $\hht(r,n)$ be a complete $r$-partite graph on $n$ vertices with each part of size $\lfloor \frac{n}{r} \rfloor$ or $\lceil \frac{n}{r} \rceil$ and let $t(r,n)$ be the number of edges in $\hht(r,n)$. We have the following results.

\begin{table}[H]
  \centering
  \caption{$\tilde{m}(n,4,c)$ for $1\leq c\leq 8$, $n\geq 5$}
  \begin{tabular}{c}
  \toprule
  $\tilde{m}(n,4,1)=1$\\[3pt]
  \midrule
  $\tilde{m}(n,4,2)=2$\\[3pt]
    \midrule
  $\tilde{m}(n,4,3)=\lfloor \frac{2}{3}n\rfloor+1$\\[3pt]
    \midrule
  $\left(\frac{n}{2}\right)^{3/2}+o(n^{3/2})\leq  \tilde{m}(n,4,4)\leq \frac{1}{2}n^{3/2}+O(n)$\\[3pt]
    \midrule
  $\tilde{m}(n,4,5)=\lfloor\frac{n^2}{4}\rfloor+1$\\[3pt]
    \midrule
  $\tilde{m}(n,4,6)=t(3,n)+1$\\[3pt]
    \midrule
  $\tilde{m}(n,4,7)=\binom{n}{2}+1$ for $n\neq 6$, $\tilde{m}(6,4,7) = 17$\\[3pt]
    \midrule
  $\tilde{m}(n,4,8)=\left\lfloor\frac{n+2}{3}\right\rfloor\left\lfloor\frac{n+1}{3}\right\rfloor\left\lfloor\frac{n}{3}\right\rfloor+1$ for $n\geq 25$\\[3pt]
  \bottomrule
  \end{tabular}
\end{table}

The cases $c=9, 10$ will be discussed later. To prove the above statements for each particular choice of $c$, we assume that $\tilde{\hf}=\hf^{(2)}\cup \hf^{(3)}$ is a complete family with $\tilde{\hf}\not\hookrightarrow (4,c)$.

Since $\hf^{(3)}\neq \emptyset$ forces $\tilde{\hf}\hookrightarrow (4,4)$, for the case $1\leq c\leq 4$ we may assume $\hf^{(3)}=\emptyset$. Then $\tilde{m}(n,4,c)=c$ is trivial for $c=1$ and $c=2$.

Let $c=3$. Consider $\hf^{(2)}$, a graph in which no four vertices span more than 2 edges. Thus $\hf^{(2)}$ has maximum degree at most two and  without a path or cycle of length three. Hence each connected component of $\hf^{(2)}$ is a single edge or a path of length two. Consequently, $|\hf^{(2)}|\leq \frac{2}{3}n$, proving $\tilde{m}(n,4,3)=\lfloor \frac{2}{3}n\rfloor+1$.

For $c=4$, $\hf^{(2)}$ is a graph that contains no subgraph on 4 vertices with 4 or more edges. Let $C_3^+$ be a triangle plus a pendant  edge. It follows that $\hf^{(2)}$ is $C_3^+$-free and $C_4$-free. Consequently if $\hf^{(2)}$ contains a triangle, then it is a connected component. It follows that each connected component of $\hf^{(2)}$ with at least 4 vertices is $\{C_3,C_4\}$-free.  For a given family $\mathscr{F}$ of graphs, let $ex(n, \mathscr{F})$ denote the maximum number of edges in an $n$-vertex graph which does not contain any member in $\mathscr{F}$ as its subgraph. The {\it Zarankiewicz number} $z(n, C_4)$  is the maximum number of edges in an $n$-vertex bipartite graph without containing a $C_4$. It is well known that $z(n, C_4)=\left(\frac{n}{2}\right)^{3/2}+o(n^{3/2})$ (see \cite{DHS}, \cite{FS}). Since bipartite graphs are $C_3$-free, we see that $ex(n, \{C_3,C_4\})\geq z(n, C_4)$.

Erd\H{o}s-R\'{e}nyi-S\'{o}s \cite{ERS} and Brown \cite{brown} showed  that $ex(q^2+q+1,C_4)\geq \frac{1}{2}q(q+1)^2$ for all prime powers $q$. F\"{u}redi \cite{Furedi1,Furedi2} proved that $ex(q^2+q+1,C_4)=\frac{1}{2}q(q+1)^2$ for all prime powers $q\geq 14$.  As
it is shown in \cite{KST} this implies $ex(n,C_4)=\frac{1}{2}n^{3/2}+O(n)$ all $n$. Thus, $ex(n, \{C_3,C_4\})\leq ex(n,C_4)=\frac{1}{2}n^{3/2}+O(n)$. These results imply that $\left(\frac{n}{2}\right)^{3/2}+o(n^{3/2})\leq \tilde{m}(n,4,4)\leq \frac{1}{2}n^{3/2}+O(n)$.

\begin{prop}
\[
\tilde{m}(n,4,5) = \left\lfloor\frac{n^2}{4}\right\rfloor+1.
\]
\end{prop}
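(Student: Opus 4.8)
The plan is to prove $\tilde m(n,4,5)\ge\lfloor n^2/4\rfloor+1$ by an example and $\tilde m(n,4,5)\le\lfloor n^2/4\rfloor+1$ by a structural argument (for $n\ge5$, as in the table). For the lower bound I take $\hf^{(3)}=\emptyset$ and let $\hf^{(2)}$ be the edge set of $\hht(2,n)$, so $|\tilde\hf|=t(2,n)=\lfloor n^2/4\rfloor$, completeness of $\tilde\hf$ is vacuous, and any $4$-set $C$ meeting one class in $i$ vertices has $|\hf^{(2)}\cap\binom{C}{2}|=i(4-i)\le4$, whence $\tilde\hf\not\hookrightarrow(4,5)$.

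For the upper bound, let $\tilde\hf=\hf^{(2)}\cup\hf^{(3)}$ be complete with $\tilde\hf\not\hookrightarrow(4,5)$. Discarding triples already forbids any $4$-set from spanning $5$ edges of $G:=\hf^{(2)}$, so $G$ is $K_4^-$-free, where $K_4^-$ denotes $K_4$ with one edge removed. I would first record the classical fact $ex(n,K_4^-)=\lfloor n^2/4\rfloor$ for $n\ge5$ (and $ex(4,K_4^-)=4$): a $K_4^-$-free graph is triangle-free, when Mantel's theorem applies, or contains a triangle $\{x,y,z\}$ into which every other vertex sends at most one edge; deleting $x,y,z$ together with the inequality $3+(n-3)+\lfloor(n-3)^2/4\rfloor\le\lfloor n^2/4\rfloor$ (valid for $n\ge5$) closes an easy induction after a few small base cases. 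If $\hf^{(3)}=\emptyset$ this already gives $|\tilde\hf|\le\lfloor n^2/4\rfloor$.

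Now suppose $\hf^{(3)}\ne\emptyset$ and fix $T=\{x,y,z\}\in\hf^{(3)}$. Completeness puts $xy,xz,yz$ into $G$, so for any $w\notin T$ the $4$-set $T\cup\{w\}$ already has weight $1+3=4$, forcing $wx,wy,wz\notin G$ and $\{w,x,y\},\{w,x,z\},\{w,y,z\}\notin\hf^{(3)}$. Thus the only $G$-neighbours of a vertex of $T$ are the other two vertices of $T$; feeding this back into completeness, a second triple of $\hf^{(3)}$ meeting $T$ would force a $G$-edge from a vertex of $T$ to a vertex outside $T$, a contradiction. Hence the members of $\hf^{(3)}$ are pairwise disjoint. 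Writing $k=|\hf^{(3)}|$ and $S=\bigcup\hf^{(3)}$ with $|S|=3k$, the picture becomes rigid: $G$ induces exactly $k$ disjoint triangles on $S$, has no edge between $S$ and $[n]\setminus S$, and is still $K_4^-$-free on $[n]\setminus S$. Therefore $|\tilde\hf|=k+3k+|\{e\in\hf^{(2)}:e\subseteq[n]\setminus S\}|\le 4k+ex(n-3k,K_4^-)$, and it remains to verify $4k+\lfloor(n-3k)^2/4\rfloor\le\lfloor n^2/4\rfloor$ for $1\le k\le\lfloor n/3\rfloor$ and $n\ge5$ — using the trivial bound $\binom{n-3k}{2}$ in place of $ex(n-3k,K_4^-)$ when $n-3k\le3$ — which reduces to $6n-9k-16\ge0$ on the relevant range plus a short finite check.

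The step I expect to be most delicate is the case $\hf^{(3)}\ne\emptyset$: one has to apply completeness in the right order (first to harvest the three edges of each triple of $\hf^{(3)}$ for free, then to rule out two triples sharing a vertex), and afterwards handle the floor functions and the small values of $n-3k$ when closing the arithmetic. The auxiliary bound $ex(n,K_4^-)=\lfloor n^2/4\rfloor$ is standard, but including its short inductive proof keeps the argument self-contained.
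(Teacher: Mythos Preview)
Your proof is correct and takes essentially the same approach as the paper's: both isolate a triple $T\in\hf^{(3)}$ (no $G$-edges leave $T$, hence by completeness no other triple meets $T$) and handle the triple-free case via the $K_4^-$-free bound, proved by the same triangle-removal induction plus Mantel. The paper simply inducts on $n$, peeling off one triple or triangle at a time, whereas you strip all $k$ triples simultaneously and then apply $ex(n-3k,K_4^-)$ --- a minor organizational difference rather than a new idea.
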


\begin{proof}
Note that $\hht(2,n)\not\hookrightarrow (4,5)$. This shows that $\tilde{m}(n,4,5)\geq \lfloor\frac{n^2}{4}\rfloor+1$.

Let $\tilde{\hf}=\hf^{(2)}\cup \hf^{(3)}$ be a complete family satisfying $\tilde{\hf}\not\hookrightarrow (4,5)$.
We prove $|\tilde{\hf}|\leq \lfloor\frac{n^2}{4}\rfloor$ by induction on $n$. Clearly it holds for $n=4$. Now we assume that it holds for $4,5,\ldots,n-1$ and prove it for $n$. If there exists $F_0\in \tilde{\hf}$ with $|F_0|=3$, then for any $y\in [n]\setminus F_0$, $\{x,y\}\notin \tilde{\hf}$ for all $x\in F_0$. It follows that $|\tilde{\hf}|\leq \lfloor\frac{(n-3)^2}{4}\rfloor+4\leq \lfloor\frac{n^2}{4}\rfloor$. Thus we may assume that $\hf^{(3)}=\emptyset$.

If there are two triangles with a common edge in $\hf^{(2)}$,  let $C$ be the set of these 4 vertices. Then $|\tilde{\hf}_{\mid C}|\geq 5$, a contradiction. Thus $\hf^{(2)}$ contains no two triangles with a common edge.  Assume  $\hf^{(2)}$ contains a triangle, say $\{x_1,x_2,x_3\}$. Then each $y\in [n]\setminus \{x_1,x_2,x_3\}$ has at most one neighbor in $\{x_1,x_2,x_3\}$. Therefore,
\[
|\{P\in \hf^{(2)}\colon P\cap \{x_1,x_2,x_3\}\neq \emptyset\}| \leq 3+(n-3)=n.
\]
Using the induction hypothesis, it follows that $|\tilde{\hf}|\leq \lfloor\frac{(n-3)^2}{4}\rfloor+n\leq \lfloor\frac{n^2}{4}\rfloor$. Finally if $\hf^{(2)}$ is triangle-free, then by  Mantel's theorem \cite{M},  $|\tilde{\hf}|= |\hf^{(2)}|\leq \lfloor\frac{n^2}{4}\rfloor$.
\end{proof}

\begin{prop}
\[
\tilde{m}(n,4,6) = t(3,n)+1.
\]
\end{prop}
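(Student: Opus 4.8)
The plan is to mirror the structure of the proof of the $c=5$ case, replacing the roles of "triangle-free'' (Mantel) and "two triangles on a common edge'' by the analogous obstructions one level up. First, the lower bound: the Turán graph $\hht(3,n)$ (viewed as a complete family with $\hf^{(3)}=\emptyset$) satisfies $\tilde\hf\not\hookrightarrow(4,6)$, since any $4$-set meets three parts in a $1{+}1{+}2$ pattern and spans at most $5$ edges; hence $\tilde m(n,4,6)\ge t(3,n)+1$. For the upper bound let $\tilde\hf=\hf^{(2)}\cup\hf^{(3)}$ be complete with $\tilde\hf\not\hookrightarrow(4,6)$; we must show $|\tilde\hf|\le t(3,n)$. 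I would argue by induction on $n$, the base cases $n\le 6$ being checked by hand (note the anomaly flagged for $c=7$ at $n=6$ suggests one should verify small cases carefully here too, though for $c=6$ I expect no exception).

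The first structural reduction is to dispose of triples. Suppose $F_0\in\hf^{(3)}$. Completeness puts $\binom{F_0}{2}\subset\hf^{(2)}$, contributing $3+1=4$ to the count on $F_0$ itself. For $y\notin F_0$ and $x\in F_0$, if $\{x,y\}\in\hf^{(2)}$ then the $4$-set $F_0\cup\{y\}$ already carries the $3$ edges of $\binom{F_0}{2}$, the triple $F_0$, and the edge $\{x,y\}$ — that is $5$; a second such edge $\{x',y\}$ would push it to $6$. So every $y\notin F_0$ has at most one neighbour in $F_0$, giving at most $n-3$ cross-edges, and likewise at most one triple through each such $y$ together with two vertices of $F_0$ need control. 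The cleanest route: delete $F_0$'s three vertices, apply induction to get $t(3,n-3)$ on the rest, and bound the edges/triples meeting $F_0$ by roughly $4+(n-3)$ plus a term for triples $\{x,x',y\}$ with $x,x'\in F_0$; one then checks $t(3,n-3)+ (\text{this})\le t(3,n)$ using $t(3,n)-t(3,n-3)\approx \tfrac{2}{3}n$ — wait, this is too small, so in fact the presence of even one triple should be shown to be \emph{impossible} for $|\tilde\hf|$ near extremal, i.e. one proves directly that an extremal family has $\hf^{(3)}=\emptyset$. Concretely: if $\hf^{(3)}\neq\emptyset$ then, as above, each vertex outside $F_0$ sends $\le 1$ edge into $F_0$, so the "link'' of $F_0$ is sparse, and a short counting argument forces $|\tilde\hf|<t(3,n)$ for $n$ large, with small $n$ done separately. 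Thus we reduce to $\hf^{(3)}=\emptyset$.

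Now $\hf^{(2)}$ is a graph in which no $4$ vertices span $\ge 6$ edges — but $4$ vertices span at most $\binom{4}{2}=6$ edges, so the condition is exactly that $\hf^{(2)}$ is $K_4$-free. By Turán's theorem, $|\hf^{(2)}|\le t(3,n)$, with equality only for $\hht(3,n)$. This immediately gives the upper bound and completes the induction. The only genuine work is therefore the triple-elimination step and the small-$n$ base cases; I expect the triple-elimination to be the main obstacle, since one must be careful that completeness does not secretly allow a triple-heavy family to compete with $\hht(3,n)$ — but the link-sparsity bound "$\le 1$ neighbour per outside vertex'' is a strong enough constraint that, combined with induction on $n-3$, it closes, analogously to the triangle case in the $c=5$ proof. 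I would present it exactly in that parallel form: handle $\hf^{(3)}\neq\emptyset$ by deletion-and-induction, then invoke Turán for the triple-free case.
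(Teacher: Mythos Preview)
Your overall plan is right, and in fact it is exactly the paper's plan: induction on $n$, eliminate a triple $F_0$ by bounding everything that meets it and applying induction on $n-3$, then invoke Tur\'{a}n when $\hf^{(3)}=\emptyset$. The gap is arithmetic, and it is what derailed you.

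You wrote $t(3,n)-t(3,n-3)\approx \tfrac{2}{3}n$. That is the value of $t(3,n)-t(3,n-1)$; for a drop of three vertices the correct value is
\[
t(3,n)-t(3,n-3)=2n-3,
\]
as one checks from $t(3,n)=\binom{n}{2}-\sum_{i}\binom{\lfloor (n+i)/3\rfloor}{2}$. With the right budget $2n-3$, the deletion argument closes cleanly once you also bound the \emph{triples} through $F_0$, which you left as ``plus a term''. Here is the missing piece: since $\tilde\hf\not\hookrightarrow(4,6)$ forces any two triples to meet in at most one vertex, each $\hf^{(3)}(x_i)$ is a matching on $[n]\setminus F_0$; and because every $y\notin F_0$ has at most one $\hf^{(2)}$-neighbour in $F_0$ (your own observation), completeness makes the three matchings $\hf^{(3)}(x_1),\hf^{(3)}(x_2),\hf^{(3)}(x_3)$ pairwise vertex-disjoint on $[n]\setminus F_0$. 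Hence at most $1+\lfloor (n-3)/2\rfloor$ triples meet $F_0$. Together with your edge bound $3+(n-3)=n$, induction gives
\[
|\tilde\hf|\le t(3,n-3)+n+\left\lfloor\tfrac{n-1}{2}\right\rfloor\le t(3,n-3)+(2n-3)=t(3,n)
\]
for $n\ge 5$. So there is no need for your detour (``show $\hf^{(3)}=\emptyset$ at the extremum''), which in any case you did not substantiate; the straightforward deletion-and-induction already works once the numbers are put right. The base case $n=4$ is immediate.
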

\begin{proof}
Clearly $\hht(3,n)\not\hookrightarrow (4,6)$. We see that $\tilde{m}(n,4,6)\geq t(3,n)+1$.

Let $\tilde{\hf}=\hf^{(2)}\cup \hf^{(3)}$ be a complete family satisfying $\tilde{\hf}\not\hookrightarrow (4,6)$.
We prove $|\tilde{\hf}|\leq t(3,n)$ by induction on $n$. Clearly it holds for $n=4$. Now assume that it holds for $4,5,\ldots,n-1$ and we prove it for $n$. If there exists $F_0=\{x_1,x_2,x_3\}\in \tilde{\hf}$, then by $\tilde{\hf}\not\hookrightarrow (4,6)$ for every $y\in [n]\setminus F$  at most one of $\{x_1,y\},\{x_2,y\},\{x_3,y\}$ is in $\tilde{\hf}$. It follows that
\[
 |\{P\in \hf^{(2)}\colon P\cap F_0\neq \emptyset\}|\leq 3+n-3=n.
\]
Note that  $\tilde{\hf}\not\hookrightarrow (4,6)$ implies $|F\cap F'|\leq 1$ for all distinct $F,F'\in \hf^{(3)}$. We infer that $\hf^{(3)}(x_i)$ is a matching. Let $\hg(x_i)=\hf^{(3)}(x_i)\cap \binom{[n]\setminus F_0}{2}$, $i=1,2,3$.
We claim that for $1\leq i<j\leq 3$, $\hg(x_i)$ and $\hg(x_j)$ are  disjoint. For otherwise since $\tilde{\hf}$ is complete, we shall find $y\in [n]\setminus F_0$ such that two of $\{x_1,y\},\{x_2,y\},\{x_3,y\}$ are in $\tilde{\hf}$, a contradiction. Hence $\hg(x_1)\cup \hg(x_2)\cup \hg(x_3)$ is a matching. Therefore,
\[
 |\{F\in \hf^{(3)}\colon F\cap F_0\neq \emptyset\}|\leq 1+\left\lfloor\frac{n-3}{2}\right\rfloor=\left\lfloor\frac{n-1}{2}\right\rfloor.
\]
By the induction hypothesis,
\begin{align}\label{ineq-4.2}
|\tilde{\hf}|\leq t(3,n-3)+n+\left\lfloor\frac{n-1}{2}\right\rfloor.
\end{align}
\begin{claim}
\begin{align}\label{ineq-4.1}
t(3,n)-t(3,n-3)=2n-3.
\end{align}
\end{claim}
\begin{proof}
Note that
\[
t(3,n) =\binom{n}{2}-\binom{\lfloor\frac{n}{3}\rfloor}{2}-\binom{\lfloor\frac{n+1}{3}\rfloor}{2}-\binom{\lfloor\frac{n+2}{3}\rfloor}{2}.
\]
Then
\begin{align*}
t(3,n)- t(3,n-3)&=\binom{n}{2}-\binom{n-3}{2}-\left(\left\lfloor\frac{n}{3}\right\rfloor-1+\left\lfloor\frac{n+1}{3}\right\rfloor-1
+\left\lfloor\frac{n+2}{3}\right\rfloor-1\right)\\[5pt]
&=(3n-6)-(n-3)=2n-3.
\end{align*}
\end{proof}
Since $n\geq 5$ implies $n+\lfloor\frac{n-1}{2}\rfloor\leq 2n-3$, by \eqref{ineq-4.2} and \eqref{ineq-4.1} we obtain $|\tilde{\hf}|\leq t(3,n)$.
Thus we may assume that $\hf^{(3)}=\emptyset$.

Since  $\tilde{\hf}\not\hookrightarrow (4,6)$ implies $\hf^{(2)}$ is $K_4$-free, by Tur\'{a}n's Theorem \cite{turan} $|\tilde{\hf}|= |\hf^{(2)}| \leq  t(3, n)$.
\end{proof}

\begin{prop}
 $\tilde{m}(n,4,7) = \binom{n}{2}+1$  for $n\neq 6$ and   $\tilde{m}(6,4,7) = 17$.
\end{prop}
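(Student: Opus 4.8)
The plan is to determine the maximum size $M(n)$ of a complete family $\tilde{\hf}=\hf^{(2)}\cup\hf^{(3)}\subset 2^{[n]}$ with $\tilde{\hf}\not\hookrightarrow(4,7)$; then $\tilde{m}(n,4,7)=M(n)+1$, so the claim is that $M(n)=\binom{n}{2}$ for $n\neq 6$ and $M(6)=16$. The lower bound for $n\neq 6$ is witnessed by $\hf^{(3)}=\emptyset$, $\hf^{(2)}=\binom{[n]}{2}$ (every $4$-set carries $6$ pairs and no triple, where ``carries'' refers to the weighted count in the definition of $\hookrightarrow$). For $n=6$ take $\hf^{(2)}=\hht(3,6)$ with classes $\{1,2\},\{3,4\},\{5,6\}$ and $\hf^{(3)}=\{135,146,236,245\}$; then $\partial\hf^{(3)}=\hf^{(2)}$ (so $\tilde{\hf}$ is complete), $\hf^{(3)}$ is linear, and every $4$-set of $[6]$ contains a full class, hence carries at most $5$ pairs and at most one triple, so at most $6$; thus $M(6)\geq 16$.

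For the upper bound let $\tilde{\hf}$ be complete with $\tilde{\hf}\not\hookrightarrow(4,7)$; write $G=\hf^{(2)}$, let $\overline{d}(v)$ be the number of non-neighbours of $v$ in $G$ and $d_3(v)=|\{T\in\hf^{(3)}\colon v\in T\}|$. Two consequences of $\tilde{\hf}\not\hookrightarrow(4,7)$, each using completeness: \textbf{(i)} $|T\cap T'|\leq 1$ for distinct $T,T'\in\hf^{(3)}$, since otherwise the $4$-set $T\cup T'$ carries $5$ pairs and $2$ triples; \textbf{(ii)} for $T=\{a,b,c\}\in\hf^{(3)}$ no vertex is $G$-adjacent to all of $a,b,c$, since otherwise $T\cup\{d\}$ carries all $6$ pairs and a triple. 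From (i) the pairs $T\setminus\{v\}$ over triples $T\ni v$ are disjoint, so $d_3(v)\leq\lfloor(n-1)/2\rfloor$, and equality forces $v$ to be $G$-adjacent to all, resp.\ all but one, other vertices, i.e.\ $\overline{d}(v)=0$ ($n$ odd), resp.\ $\overline{d}(v)\leq 1$ ($n$ even). From (ii), since $ab,ac,bc\in G$, the non-neighbour sets of $a,b,c$ lie in and cover $[n]\setminus T$, so $\sum_{v\in T}\overline{d}(v)\geq n-3$. Summing over $T\in\hf^{(3)}$ and writing $R=\binom{n}{2}-|\hf^{(2)}|$ (so $\sum_v\overline{d}(v)=2R$ and $\sum_v d_3(v)=3|\hf^{(3)}|$) gives
\[
|\hf^{(3)}|\,(n-3)\ \leq\ \sum_{v\in[n]}d_3(v)\,\overline{d}(v).
\]

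If $n$ is odd, each $v$ has $d_3(v)\leq(n-3)/2$ or $\overline{d}(v)=0$, so the right side is $\leq\tfrac{n-3}{2}\cdot 2R=(n-3)R$, whence $|\hf^{(3)}|\leq R$ and $|\tilde{\hf}|=\binom{n}{2}-R+|\hf^{(3)}|\leq\binom{n}{2}$. For $n=6$, a linear triple system on $6$ points has at most $4$ triples (five would be a non-existent $S(2,3,6)$), and if $\hf^{(3)}\neq\emptyset$ then $R\geq n-3=3$ (by (ii), the $n-3$ vertices outside a fixed triple give distinct non-edges), so $|\tilde{\hf}|\leq 15-3+4=16$, while $|\tilde{\hf}|\leq 15$ if $\hf^{(3)}=\emptyset$. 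For $n$ even with $n\neq 6$, call $v$ \emph{bad} if $d_3(v)=n/2-1$ (then $\overline{d}(v)\leq 1$); splitting the sum by badness gives $\sum_v d_3(v)\overline{d}(v)\leq(n-4)R+|B_1|$, where $B_1$ is the set of bad $v$ with $\overline{d}(v)=1$, so it suffices to prove $|B_1|\leq R$. Each $v\in B_1$ has a unique non-neighbour $m(v)$; the map $v\mapsto\{v,m(v)\}$ into the $R$ non-edges has fibres of size $\leq 2$, a size-$2$ fibre being a \emph{mutual pair} $v,v'$ with $m(v)=v'$ and $m(v')=v$. If there is no mutual pair the map is injective and $|B_1|\leq R$. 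If $\{v,v'\}$ is a mutual pair, the link of $v$ is a perfect matching of $[n]\setminus\{v,v'\}$ and (ii) applied to each triple $\{v,a,b\}$ gives $\overline{d}(a)+\overline{d}(b)\geq n-4$; summing over the $n/2-1$ such triples gives $2R\geq(n/2-1)(n-4)+2$, i.e.\ $R\geq\tfrac{(n-2)(n-4)}{4}+1$, which is $\geq n\geq|B_1|$ for $n\geq 10$, while for $n=8$ it gives $R\geq 7$ and $|B_1|=8$ is impossible (it would force $R=4$). Hence $|B_1|\leq R$, so $|\hf^{(3)}|\leq R$ and $|\tilde{\hf}|\leq\binom{n}{2}$.

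I expect the genuine difficulty to be the even case: the bare counting inequality only gives $|\hf^{(3)}|\leq\frac{n-2}{n-3}R$, and closing the gap forces one to exploit that the troublesome high-$d_3(v)$ vertices are almost completely joined to the rest while a cluster of them is expensive in non-edges elsewhere; the value $n=8$ sits on the boundary of this estimate and needs the separate remark above, and $n=6$ is the true exception, where the slack $|B_1|\leq n$ buys exactly one extra set.
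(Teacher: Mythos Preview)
Your proof is correct and rests on the same double-counting as the paper: both exploit observation (ii) to show that each triple $T$ contributes at least $n-3$ incidences with the set of non-edges of $G$, giving $|\hf^{(3)}|(n-3)\leq\sum_v d_3(v)\,\overline d(v)$ (which is exactly the edge count of the paper's bipartite graph $\hb$ between $\hf^{(3)}$ and $\binom{[n]}{2}\setminus\hf^{(2)}$). The paper bounds this sum by bounding the degree of each non-edge in $\hb$ by $n-3$; you bound it vertex-wise via $d_3(v)\leq\lfloor(n-1)/2\rfloor$. For odd $n$ your version is slightly slicker. For even $n$ the two arguments converge: your ``mutual pair'' $\{v,v'\}$ in $B_1$ is precisely the paper's exceptional non-edge $P=\{z_1,z_2\}$ of degree $n-2$ in $\hb$, and both of you then read off the perfect-matching link of $v$ (resp.\ $z_1$) on $[n]\setminus\{v,v'\}$ and count the forced non-edges between matching pairs to get $R\geq 2\binom{(n-2)/2}{2}+1=\tfrac{(n-2)(n-4)}{4}+1$. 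The paper combines this directly with $|\hf^{(3)}|\leq n-2$, whereas you feed it back into the inequality $|B_1|\leq R$; the $n=8$ boundary case needs one extra remark in your version but is handled correctly. The $n=6$ construction and bound are identical.
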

\begin{proof}
Note that $\binom{[n]}{2}\not\hookrightarrow (4,7)$. It follows that $\tilde{m}(n,4,7)\geq \binom{n}{2}+1$.

For $n=6$, define $\hf^{(3)} =\{ \{1,3,5\}, \{1,4,6\}, \{2,3,6), \{2,4,5\}\}$, $\hf^{(2)}=\partial \hf^{(3)}$ and $\tilde{\hf}=\hf^{(2)}\cup \hf^{(3)}$. Then $\hf^{(2)}$ is a complete 3-partite graph on parts $\{1,2\}$, $\{3,4\}$ and $\{5,6\}$. Now every 4-set $C\subset [6]$ contains at least one full part and at most one edge in $\hf^{(3)}$. It follows that $|\tilde{\hf}_{\mid C}|\leq 1+(6-1)=6$. Thus  $\tilde{m}(6,4,7) \geq  4+12+1=17$.

Suppose that $\tilde{\hf}=\hf^{(2)}\cup \hf^{(3)}$ is a complete family of the maximal size satisfying $\tilde{\hf}\not\hookrightarrow (4,7)$.

\begin{claim}\label{claim-4.4}
For any $P=\{z_1,z_2\}\notin \hf^{(2)}$, $|\hf^{(3)}(z_1)|\leq \lfloor\frac{n-2}{2}\rfloor$ and $|\hf^{(3)}(z_2)|\leq \lfloor\frac{n-2}{2}\rfloor$ .
\end{claim}
\begin{proof}
Note that $\tilde{\hf}\not\hookrightarrow (4,7)$ implies $|F\cap F'|\leq 1$ for all distinct $F,F'\in \hf^{(3)}$. It follows that $\hf^{(3)}(x)$ is a matching for all $x\in [n]$.

Let $T_1,T_2,\ldots,T_r$ be the triples in $\tilde{\hf}$ that contain $z_1$. Since $P\notin \hf^{(2)}$, none of them contain $z_2$ and $T_1\setminus\{z_1\},\ldots,T_r\setminus\{z_1\}$ are pairwise disjoint. Hence $|\hf^{(3)}(z_1)|=r\leq \lfloor\frac{n-2}{2}\rfloor$. Similarly, $|\hf^{(3)}(z_2)|\leq \lfloor\frac{n-2}{2}\rfloor$.
\end{proof}

Let us construct a bipartite graph $\hb$ between $\hf^{(3)}$ and $\binom{[n]}{2}\setminus \hf^{(2)}$ by connecting $T\in \hf^{(3)}$ and $P\in \binom{[n]}{2}\setminus \hf^{(2)}$ iff $T\cap P\neq \emptyset$. Note that in this case $|T\cap P|=1$ by completeness of $\tilde{\hf}$.

For $x\notin T\in \hf^{(3)}$, $\tilde{\hf}\not\hookrightarrow (4,7)$ implies that at least one of the edges $\{x,y\}$, $y\in T$ is missing from $\hf^{(2)}$. Thus the degree of $T$ in $\hb$ is at least $n-3$. Should the maximum degree of $P\in \binom{[n]}{2}\setminus \hf^{(2)}$ in $\hb$ be at most $n-3$, $|\hf^{(3)}|\leq \binom{n}{2}-|\hf^{(2)}|$ and thereby $|\tilde{\hf}|\leq \binom{n}{2}$ would follow.

Assume next that $P=\{z_1,z_2\}\in \binom{[n]}{2}\setminus \hf^{(2)}$ and it has degree at least $n-2$. By Claim \ref{claim-4.4} $|\hf^{(3)}(z_i)|\leq \lfloor\frac{n-2}{2}\rfloor$, $i=1,2$. If $n$ is odd we infer $|\hf^{(3)}(z_1)|+|\hf^{(3)}(z_2)|\leq n-3$, a contradiction. The only remaining possibility is that $n$ is even and $\hf^{(3)}(z_i)$ is a perfect matching for $i=1,2$. We need only  one of them.

Let $\hf^{(3)}(z_1)=\{E_i\colon 1\leq i\leq \frac{n-2}{2}\}$. We claim that at least two of the possible four edges between $E_i$ and $E_j$ are missing from $\hf^{(2)}$. Indeed otherwise we fix $x\in E_j$ that is joined (in $\hf^{(2)}$) to both vertices of $E_i$. However this forces that $\{z_1,x\}\cup E_i$ span a $K_4$ in $\hf^{(2)}$ whence $\tilde{\hf} \hookrightarrow (4,7)$.

Consequently, together with $P$ there are at least $2\binom{\frac{n-2}{2}}{2}+1$ missing edges from $\hf^{(2)}$. As to $T\in \hf^{(3)}$, $T\cap P=\emptyset$ would force that $T\cup\{z_i\}$ spans a $K_4$ in $\hf^{(2)}$ and  $\tilde{\hf}\hookrightarrow (4,7)$. Thus $|\hf^{(3)}|=|\hf^{(3)}(z_1)|+|\hf^{(3)}(z_2)|\leq n-2$. For $n\geq 8$, $2\binom{\frac{n-2}{2}}{2}+1>n-2$ implies $|\hf^{(2)}|+|\hf^{(3)}|<\binom{n}{2}$ and we are done.

For $n=6$ we infer $|\tilde{\hf}|=|\hf^{(2)}|+|\hf^{(3)}|\leq \binom{6}{2}-3+4=16$.
\end{proof}

What remains are $m(n,4,14)$ and $m(n,4,15)$. These are closely related to the famous unsolved problems of Tur\'{a}n  on $3$-graphs: $K_4^{(3)}$ and $K_4^{(3)-}$, where $K_4^{(3)}$ denotes the complete 3-graph on 4 vertices and $K_4^{(3)-}$ denotes $K_4^{(3)}$ minus an edge.  For a $k$-graph $F$, let $ex_k(n, F)$ denote the maximum number of edges in an $n$-vertex $k$-graph which does not contain $F$ as a subgraph. It is well known that $\lim_{n\rightarrow \infty}\binom{n}{k}^{-1}ex_k(n,F)$ exists. It is called the  {\it Tur\'{a}n density} of $F$ and denoted by $\pi(F)$. Tur\'{a}n \cite{turan} proposed a construction showing that $\pi(K_4^{(3)})\geq \frac{5}{9}$. Chung and Lu \cite{CL} proved $\pi(K_4^{(3)})\leq\frac{3+\sqrt{17}}{12} \approx 0.593592\cdots$. By applying the flag algebra method invented by Razborov, Razborov \cite{Ra} showed    $\pi(K_4^{(3)})\leq 0.561666$. For $K_4^{(3)-}$, Frankl and F\"{u}redi \cite{FF0} proved that $\frac{2}{7}\leq \pi(K_4^{(3)-})\leq \frac{1}{3}$. In \cite{FV}, by using the  flag algebra method  Falgas-Ravry and Vaughan showed  $\pi(K_4^{(3)-})\leq  0.286889$.

Let us derive the formula for $m(n,4,14)$ and $m(n,4,15)$ from a more general statement. In analogy with $3$-graphs let $K_r^{(k)}$ and $K_r^{(k)-}$ denote the complete $k$-graph and   complete $k$-graph minus an edge on $r$ vertices, respectively.

\begin{prop}
Let $\hf\subset 2^{[n]}$ be a down-set. Then (i) and (ii) hold.
\begin{itemize}
  \item[(i)] $\hf\not\rightarrow (k+1,2^{k+1}-1)$ iff $\hf^{(k)}$ is $K_{k+1}^{(k)}$-free.
  \item[(ii)] $\hf\not\rightarrow (k+1,2^{k+1}-2)$ iff $\hf^{(k)}$ is $K_{k+1}^{(k)-}$-free.
\end{itemize}
\end{prop}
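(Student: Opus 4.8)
The plan is to prove both directions of (i) and (ii) by connecting the trace condition on a 4-set (more generally a $(k+1)$-set) to the presence of a clique or near-clique in the top level $\hf^{(k)}$, exploiting the down-set structure throughout. Recall that since $\hf$ is a down-set we may assume all members of $\hf$ have size at most $k+1$; in fact, if $\hf^{(k+1)}\ne\emptyset$, say $F\in\hf^{(k+1)}$, then $2^F\subset\hf$, so $\hf_{\mid F}=2^F$ has size $2^{k+1}$, giving both $\hf\rightarrow(k+1,2^{k+1}-1)$ and $\hf\rightarrow(k+1,2^{k+1}-2)$. Moreover $\hf^{(k+1)}\ne\emptyset$ trivially produces a $K_{k+1}^{(k)}$ in $\hf^{(k)}$ (the $k+1$ facets of $F$), hence also a $K_{k+1}^{(k)-}$. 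So in proving each direction we may and do assume $\hf^{(k)}$ is the top level, i.e.\ $F\in\hf\Rightarrow|F|\le k$.

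For (i), first suppose $\hf^{(k)}$ contains a copy of $K_{k+1}^{(k)}$ on a vertex set $C$ with $|C|=k+1$: every $k$-subset of $C$ lies in $\hf$, and since $\hf$ is a down-set every proper subset of $C$ lies in $\hf$ as well; thus $\hf_{\mid C}\supseteq 2^C\setminus\{C\}$, so $|\hf_{\mid C}|\ge 2^{k+1}-1$ and $\hf\rightarrow(k+1,2^{k+1}-1)$. Conversely, suppose $\hf\rightarrow(k+1,2^{k+1}-1)$, witnessed by a $(k+1)$-set $C$ with $|\hf_{\mid C}|\ge 2^{k+1}-1$. The traces $F\cap C$ all have size at most $k$ (the top level of $\hf$ is $\hf^{(k)}$), so $\hf_{\mid C}\subseteq 2^C\setminus\{C\}$, forcing equality: every $G\subsetneq C$ occurs as some $F\cap C$. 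In particular each $k$-subset $G$ of $C$ equals $F\cap C$ for some $F\in\hf$, and then $G\subseteq F$ with $|F|\le k$ forces $G=F\in\hf$. Hence all $k+1$ facets of $C$ are in $\hf^{(k)}$, i.e.\ $\hf^{(k)}$ contains $K_{k+1}^{(k)}$. This proves (i).

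For (ii) the argument is parallel but one must track the single missing set. If $\hf^{(k)}$ contains $K_{k+1}^{(k)-}$ on vertex set $C$ — say all $k$-subsets of $C$ except one, call it $G_0$, lie in $\hf$ — then by the down-set property $\hf_{\mid C}\supseteq 2^C\setminus\{C,G_0\}$, so $|\hf_{\mid C}|\ge 2^{k+1}-2$ and $\hf\rightarrow(k+1,2^{k+1}-2)$. Conversely, if $C$ witnesses $|\hf_{\mid C}|\ge 2^{k+1}-2$, then since $\hf_{\mid C}\subseteq 2^C\setminus\{C\}$ (again using that the top level is $\hf^{(k)}$, so no trace equals $C$), at most one subset of $C$ other than $C$ itself is missing from $\hf_{\mid C}$; in particular at least $k$ of the $k+1$ facets of $C$ occur as traces, and as before each such facet $G$ then lies in $\hf^{(k)}$ itself. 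Thus $\hf^{(k)}$ contains at least $k$ of the facets of $C$, i.e.\ a copy of $K_{k+1}^{(k)-}$ (or of $K_{k+1}^{(k)}$, which contains $K_{k+1}^{(k)-}$). This gives (ii).

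I do not anticipate a genuine obstacle here: the only point requiring care is the reduction that lets us assume $\hf$ has no members of size $k+1$ (so that $2^C\setminus\{C\}$ is the right ambient set for the trace on a $(k+1)$-set $C$), together with the elementary observation — valid precisely because the top level is $\hf^{(k)}$ — that a $k$-subset of $C$ appearing as a trace must itself be a member of $\hf$. Everything else is bookkeeping with $2^C$. One should also remark that the ``down-set'' hypothesis is used in both directions: in the forward direction to fill in all proper subsets of $C$ once the facets are present, and in the reverse direction it is automatic but worth noting that no separate argument is needed.
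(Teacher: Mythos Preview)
Your proof is correct and follows essentially the same approach as the paper: use the down-set property to fill in all lower levels once enough $k$-facets of a $(k+1)$-set are present, and in the other direction bound the trace by observing that the top level is $\hf^{(k)}$ and that too few $k$-facets survive. The paper only writes out (ii) and is terser (it implicitly uses that $\hf_{\mid Y}=\hf\cap 2^{Y}$ for a down-set to count directly), while you argue the contrapositive and spell out the reduction to $|F|\le k$ more carefully, but the content is the same.
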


\begin{proof}
Since the proofs are almost identical let us show (ii) only. If $Y\in \binom{[n]}{k+1}$ spans $K_{k+1}^{(k)-}$ in $\hf$, then being a down-set forces $\binom{Y}{\ell}\subset \hf$ for all $0\leq \ell<k$. Hence $|\hf_{\mid Y}|\geq 2^{k+1}-2$.

On the other hand if $\hf$ is a $K_{k+1}^{(k)-}$-free down-set then $|F|\leq k$ for all $F\in \hf$ and $|\hf\cap \binom{Y}{k}|\leq \binom{k+1}{k}-2$ for all $Y\in \binom{[n]}{k+1}$. Thus $\hf\not\rightarrow (k+1,2^{k+1}-2)$.
\end{proof}

\begin{cor}
\begin{itemize}
  \item[(i)] $m(n,k+1,2^{k+1}-1)=1+\sum\limits_{0\leq \ell<k}\binom{n}{\ell}+ex_k(n,K_{k+1}^{(k)})$.
  \item[(ii)] $m(n,k+1,2^{k+1}-2)=1+\sum\limits_{0\leq \ell<k}\binom{n}{\ell}+ex_k(n,K_{k+1}^{(k)-})$.
\end{itemize}
\end{cor}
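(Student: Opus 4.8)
The plan is to read the Corollary off directly from the preceding Proposition together with Lemma~\ref{lem-1.1}; no new idea beyond unwinding the definition of $m(n,a,b)$ is needed, and I would treat (i) and (ii) in parallel.

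\textbf{Step 1 (reduce to down-sets).} Unwinding the arrow relation, $(n,m)\not\rightarrow(a,b)$ holds precisely when some $\hf\subset 2^{[n]}$ with $|\hf|\geq m$ satisfies $\hf\not\rightarrow(a,b)$, so
\[
m(n,a,b)=1+\max\{|\hf|\colon \hf\subset 2^{[n]},\ \hf\not\rightarrow(a,b)\}.
\]
By Lemma~\ref{lem-1.1}, whose down-compression proof leaves $|\hf|$ unchanged, the maximum may be taken over down-sets only.

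\textbf{Step 2 (apply the Proposition).} For (i) I would invoke clause (i) of the Proposition with $a=k+1$: a down-set $\hf$ satisfies $\hf\not\rightarrow(k+1,2^{k+1}-1)$ iff $\hf^{(k)}$ is $K_{k+1}^{(k)}$-free. As recorded in the proof of the Proposition, such an $\hf$ has $|F|\leq k$ for all $F\in\hf$, since a member of size $k+1$ would force its $k+1$ many $k$-subsets into $\hf^{(k)}$ and create a forbidden $K_{k+1}^{(k)}$. Hence $\hf=\hf^{(0)}\cup\hf^{(1)}\cup\cdots\cup\hf^{(k)}$ with the sole constraint that $\hf^{(k)}$ is $K_{k+1}^{(k)}$-free.

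\textbf{Step 3 (optimize and conclude).} To maximize $|\hf|$ among such down-sets I take $\binom{[n]}{\ell}\subset\hf$ for every $\ell<k$ --- this keeps $\hf$ a down-set and imposes nothing on $\hf^{(k)}$ --- and pick $\hf^{(k)}$ to be an extremal $K_{k+1}^{(k)}$-free $k$-graph, of size $ex_k(n,K_{k+1}^{(k)})$. Conversely, for any $K_{k+1}^{(k)}$-free $\hg\subset\binom{[n]}{k}$ the family $\binom{[n]}{\leq k-1}\cup\hg$ is a down-set satisfying $\hf\not\rightarrow(k+1,2^{k+1}-1)$. Therefore $\max|\hf|=\sum_{0\leq\ell<k}\binom{n}{\ell}+ex_k(n,K_{k+1}^{(k)})$, which with Step~1 gives (i). Part (ii) is verbatim the same, using clause (ii) of the Proposition and $K_{k+1}^{(k)-}$ in place of $K_{k+1}^{(k)}$; the needed fact that a $K_{k+1}^{(k)-}$-free down-set has all members of size $\leq k$ is exactly the one proved in the Proposition.

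I do not expect a real obstacle: this is a repackaging of results already available. The only two points worth a sentence of care are that the down-compression in Lemma~\ref{lem-1.1} preserves $|\hf|$, and that padding a family with all sets of size $<k$ neither breaks the down-set property nor touches its $k$-uniform layer, so the optimization over down-sets genuinely reduces to the Tur\'an extremal problem $ex_k(n,\cdot)$ for $(k+1)$-cliques and near-cliques.
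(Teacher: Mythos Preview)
Your proof is correct and is exactly the argument the paper has in mind: the Corollary is stated without proof immediately after the Proposition, because it follows at once from the Proposition, Lemma~\ref{lem-1.1}, and the definition of $m(n,a,b)$, just as you spell out. The only point worth a word of caution is that Lemma~\ref{lem-1.1} as stated only promises \emph{some} down-set with the same non-arrow property, not one of the same size; you correctly note that the standard down-shift proof preserves cardinality, which is what is actually needed here.
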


Let us close this paper by stating an old but attractive conjecture. Recall that $\hf$ is {\it antichain} if $F\subset F'$ never holds for distinct members $F,F'\in \hf$.

\begin{conj}[\cite{F89}]
Let $k$ be a non-negative integer, $n\geq 2k$. Suppose that $\hf\subset 2^{[n]}$ is an antichain with $\hf\not\rightarrow (k+1,2^{k+1})$. Then $|\hf|\leq \binom{n}{k}$.
\end{conj}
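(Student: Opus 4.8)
\medskip
\noindent\emph{Towards the conjecture.} The relation $\hf\not\rightarrow(k+1,2^{k+1})$ says exactly that no $(k+1)$-element set is shattered by $\hf$, i.e.\ $\hf$ has VC-dimension at most $k$; the Sauer--Shelah bound \eqref{ineq-1.1} then gives $|\hf|\le\sum_{i\le k}\binom{n}{i}$ for free, and the whole point is to use the antichain hypothesis to reduce this to $\binom{n}{k}$. Lemma~\ref{lem-1.1} is of no help here: the only down-sets of VC-dimension $\le k$ are the subfamilies of $\binom{[n]}{\le k}$, the largest being $\binom{[n]}{\le k}$ itself, of size precisely the Sauer--Shelah sum. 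So the antichain condition has to be used head-on. A harmless first step is to pass to a shift-stable family: the down-shifts $S_{ij}$ ($i<j$) preserve $|\hf|$ and the antichain property, and --- by the classical fact behind the shifting proof of Sauer--Shelah --- never enlarge the family of shattered sets, hence preserve VC-dimension $\le k$.

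\medskip
\noindent The plan is then induction on $n$. For $n\in\{2k,2k+1\}$ the number $\binom{n}{k}$ is the size of the largest antichain in $2^{[n]}$, so Sperner's theorem suffices. For $n\ge 2k+2$, split along the last element: let $\ha=\hf(\bar n)$ and $\hb=\hf(n)$, so that $|\hf|=|\ha|+|\hb|$. Both $\ha$ and $\hb$ are antichains in $2^{[n-1]}$ of VC-dimension $\le k$ (for $\hb$ one uses $B_1\subsetneq B_2\Rightarrow B_1\cup\{n\}\subsetneq B_2\cup\{n\}$, and the fact that $\hb$ shattering $Z$ forces $\hf$ to shatter $Z$). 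Moreover, for $Z\subseteq[n-1]$ one checks $\hf_{\mid Z\cup\{n\}}=\ha_{\mid Z}\cup\{S\cup\{n\}:S\in\hb_{\mid Z}\}$, so $\hf$ shatters $Z\cup\{n\}$ iff $\ha$ and $\hb$ \emph{both} shatter $Z$; hence any set shattered by both $\ha$ and $\hb$ has size at most $k-1$. By the induction hypothesis $|\ha|\le\binom{n-1}{k}$, and \emph{if} $\hb$ has VC-dimension $\le k-1$ then also $|\hb|\le\binom{n-1}{k-1}$, whence $|\hf|\le\binom{n-1}{k}+\binom{n-1}{k-1}=\binom{n}{k}$.

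\medskip
\noindent\emph{The main obstacle.} In general $\hb$ can have VC-dimension exactly $k$ --- already for $k=1$, if $\hf=\{\{i,n\}:i<n\}$ then $\hb=\binom{[n-1]}{1}$. What is needed is a \emph{trade-off}: whenever $|\hb|$ exceeds $\binom{n-1}{k-1}$, the bound on $|\ha|$ must improve by at least that excess, exploiting the constraint that $\ha$ and $\hb$ share no shattered $k$-set. Making this precise is the crux, and as stated it is essentially a two-family strengthening of the conjecture, so the induction threatens to be circular unless that constraint is used genuinely. The obvious shortcuts do not suffice: Pajor's inequality $|\hg|\le\#\{Z:\hg\text{ shatters }Z\}$ applied to $\ha,\hb$ is far too lossy, since an antichain such as $\binom{[n]}{k}$ already shatters $\sum_{i\le k}\binom{n}{i}$ sets; and the LYM inequality is useless because $\binom{n}{|F|}$ can greatly exceed $\binom{n}{k}$ for mid-range $|F|$, so the VC constraint would have to be fed in as an extra weighting favouring the ``fat'' members of $\hf$.

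\medskip
\noindent\emph{An alternative route, same obstacle.} One may instead try to show that a shift-stable antichain of \emph{maximum} size has no member of size $>k$, i.e.\ $\hf\subseteq\binom{[n]}{\le k}$; granting this one finishes cleanly, since a symmetric chain decomposition of $2^{[n]}$ restricts (using $n\ge 2k$) to a cover of $\binom{[n]}{\le k}$ by $\sum_{i\le k}\bigl(\binom{n}{i}-\binom{n}{i-1}\bigr)=\binom{n}{k}$ chains, and an antichain meets each chain at most once. But ruling out a member $F$ of size $>k$ would require a local exchange argument --- a way to replace $F$, possibly repairing nearby members, so as to keep an antichain of VC-dimension $\le k$ without decreasing its size --- and it is exactly there, ruling out ``fat'' members, that I do not see how to conclude. (The cases $k\le 1$ can be checked by hand; already $k=2$ seems to demand the full difficulty, in the guise of a Tur\'{a}n-type problem for the pair $(\hf^{(2)},\hf^{(3)})$ together with the larger members of $\hf$.)
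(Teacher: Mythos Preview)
This statement is presented in the paper as an open \emph{conjecture}, not a theorem; the paper gives no proof, only the remark that the cases $k\le 2$ were settled in \cite{F89} and the case $k=3$ by Anstee and Sali \cite{AS}. Your writeup is likewise not a proof but an honest exploration that ends by naming the obstacle, so there is no ``paper's proof'' to compare against --- both the paper and you leave the general case open.

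Your analysis is sound as far as it goes: the reformulation as ``antichain of VC-dimension $\le k$'', the base cases $n\in\{2k,2k+1\}$ via Sperner, the splitting $|\hf|=|\ha|+|\hb|$ with both pieces antichains, and the observation that $\ha$ and $\hb$ share no shattered $k$-set are all correct. The obstacle you isolate --- that $\hb$ can have VC-dimension exactly $k$, so a genuine trade-off between $|\ha|$ and $|\hb|$ is required --- is precisely where the problem resists a clean induction. One small imprecision worth flagging: the $ij$-shifts can change \emph{which} sets are shattered (for instance $\{\emptyset,\{2\}\}$ shatters $\{2\}$ but not $\{1\}$, while its $S_{12}$-shift $\{\emptyset,\{1\}\}$ does the opposite), so the phrase ``never enlarge the family of shattered sets'' is not literally correct. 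What \emph{is} true, and is all you need, is that $ij$-shifts preserve the relation $\hf\not\rightarrow(k+1,2^{k+1})$; this is exactly the content of Lemma~\ref{lem-1.1}. You might also note that the known range is a bit wider than you suggest: $k=2$ is already done in \cite{F89} and $k=3$ in \cite{AS}, so the first genuinely open case is $k=4$.
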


Let us note that the statement was proved in \cite{F89} for $k\leq 2$ and by Anstee and Sali \cite{AS} for $k=3$.

\end{document}